\documentclass[11pt,reqno]{amsart}
\usepackage{indentfirst, amssymb,amsmath,amsthm, mathrsfs,cite,graphicx,float}
\newtheorem*{theoA}{Theorem A}
\newtheorem*{theoB}{Theorem B}
\newtheorem*{theoC}{Theorem C}
\newtheorem*{theoD}{Theorem D}
\newtheorem*{theoE}{Theorem E}
\newtheorem*{theoF}{Theorem F}

\newtheorem{theo}{Theorem}[section]

\newtheorem*{lemA}{\textrm{Lemma} A}

\newtheorem{cor}{Corollary}[section]

\newtheorem{defi}{Definition}

\newtheorem{prob}{Problem}[section]
\newtheorem{open problem}{Open problem}[section]
\newcommand{\pa}{\partial}
\newcommand{\ol}{\overline}
\newcommand{\be}{\begin{equation}}
\newcommand{\ee}{\end{equation}}
\newcommand{\bs}{\begin{small}}
\newcommand{\es}{\end{small}}
\newcommand{\beas}{\begin{eqnarray*}}
\newcommand{\eeas}{\end{eqnarray*}}
\newcommand{\bea}{\begin{eqnarray}}
\newcommand{\eea}{\end{eqnarray}}
\newcommand{\D}{\mathbb{D}}

\renewcommand{\epsilon}{\varepsilon}
\numberwithin{equation}{section}

\begin{document}
\title[On the univalence criteria]{On the univalence criteria for elliptic polyharmonic and polyelliptic-harmonic mappings}
\author[R. Mandal and S. K. Guin]{ Rajib Mandal and Sudip Kumar Guin}
\date{}
\address{Rajib Mandal, Department of Mathematics, Raiganj University, Raiganj, West Bengal-733134, India.}
\email{rajibmathresearch@gmail.com}
\address{Sudip Kumar Guin, Department of Mathematics, Raiganj University, Raiganj, West Bengal-733134, India.}
\email{sudipguin20@gmail.com}
\maketitle
\let\thefootnote\relax
\footnotetext{2020 Mathematics Subject Classification: 30C99, 30C62, 33E05.}
\footnotetext{Key words and phrases: harmonic mappings, polyharmonic mappings, coefficients estimates, $(K,K')$-elliptic mapping, Landau-Bloch-type theorems.}
\begin{abstract}
In this paper, we first establish Landau-Bloch-type theorems for poly $(K,K')$-elliptic harmonic mappings, which are sharp in some given cases. Thereafter, we provide several coefficient bounds for $(K,K')$-elliptic and $K$-quasiregular polyharmonic mappings with bounded minimum distortion. Furthermore, using these coefficient bounds, we establish Landau-Bloch-type theorems for these mappings.
\end{abstract}
\section{Introduction and Preliminaries}
Let $\D_r=\{z\in\mathbb{C}:|z|<r\}$ be the open disk with center at the origin and radius $r>0$ and $\D:=\D_1$. In a domain $\Omega\subset\mathbb{C}$, a $2p$-times $(p\geq 1)$ continuously differentiable complex-valued mapping $F$ is called polyharmonic if it satisfies the polyharmonic equation $\Delta^pF=\Delta(\Delta^{p-1}F)=0$, where $\Delta $ stands for the Laplacian operator 
\beas \Delta :=4\frac{\pa^2}{\pa z \pa\bar{z}}=\frac{\pa^2}{\pa x^2}+\frac{\pa^2}{\pa y^2}.\eeas
The case $p=1$ (or $p=2$) corresponds to harmonic (or biharmonic) mapping. For a simply connected domain $\Omega\subset\mathbb{C}$, $F$ is polyharmonic in $\Omega$ if, and only, if $F$ has the form 
\beas F(z)=\sum\limits_{k=1}^p|z|^{2(k-1)}G_{p-k+1}(z),\;\;z\in\Omega,\eeas
where $G_{p-k+1}(z)$ is a complex-valued harmonic mapping in $\Omega$ for each $k\in\left\{1,2,\dots,p\right\}$ (see \cite{CPW2011,LL2014}).\\[2mm]
\indent For a continuously differentiable mapping $f$ in $\D$, we use the following standard notations:
\beas \Lambda_f(z) &=&\max\limits_{0\leq t \leq 2\pi} \left|f_z(z)+e^{-2it}f_{\ol{z}}(z)\right|=|f_z(z)|+|f_{\ol{z}}(z)|\eeas
and
\beas \lambda_f(z) &=&\min\limits_{0\leq t \leq 2\pi} \left|f_z(z)+e^{-2it}f_{\ol{z}}(z)\right|=\left||f_z(z)|-|f_{\ol{z}}(z)|\right|\eeas
for maximum and minimum distortions, respectively.
For a sense-preserving harmonic mapping $f$, the Jacobian $J_f=|f_z|^2-|f_{\ol{z}}|^2>0$, and thus $J_f(z)=\Lambda_f(z)\lambda_f(z)$. In $1936$, Hans Lewy \cite{L1936} showed that a harmonic function $f$ is locally univalent (or one-to-one) at $z$ if, and only if,  $J_f(z)\ne 0$. Each harmonic mapping $f$ in $\D$ has the canonical representation $f=h+\ol{g}$, where $h$ and $g$ are analytic in $\D$ such that $g(0)=0$ (see \cite{D2004}). Harmonic mappings are currently of significant interest due to their applicability in fluid flows problems (see \cite{AC2012,CM2017}).\\[2mm]
\indent
Now we recall the classical Landau theorem (see \cite{L1926}): if $f$ is a normalized analytic function in $\D$ with $|f(z)| < M $ for $z\in \D$, then
$f$ is univalent in $\D_{\rho}$ with $\rho=1/(M+\sqrt{M^2-1})$, and $f(\D_{\rho})$ contains a disk of radius $M\rho^2$. The result is sharp, with the extremal function $Mz(1-Mz)/(M-z)$.\\
Furthermore, the classical Bloch theorem asserts the existence of a positive constant $b$ such that if $f$ is analytic on $\D$ with $f'(0)=1$, then the image $f(\D)$ contains a schlicht disk of radius $b$, {\it i.e.}, a disk of radius $b$, which is the univalent image of some region in $\D$. The Bloch constant is defined as the supremum of such constants $b$ (see \cite{CGH2000,GK2003}).\\[2mm]
\indent For a complex-valued function $f(z)=u+iv$, $z=x+iy$, the formal derivative $\D_f$ of $f$ is given by
\beas \D_f=\left(\begin{array}{ccc}u_x&u_y\\v_x&v_y\end{array}\right) \eeas
and the norm of $\D_f$ is defined by
\beas ||\D_f||=\sup_{|z|=1}|\D_fz|&=&\sup_{x^2+y^2=1}\left|(xu_x+yu_y)+i(xv_x+yv_y)\right|\\[2mm]&=&\sup_{|z|=1}|zf_z+\ol{z}f_{\ol{z}}|=|f_z|+|f_{\ol{z}}|=\Lambda_f.\eeas
\begin{defi}\cite{R1993}
In a domain $\Omega\subset \mathbb{C}$, a mapping $f:\Omega\to \mathbb{C}$ is said to be absolutely continuous on lines (or $ACL$), if for every closed rectangle $R\subset\Omega$ with sides parallel to the axes $x$ and $y$, $f$ is absolutely continuous on almost every horizontal line and almost every vertical line in $R$. Such a mapping $f$ has partial derivatives $f_x$ and $f_y$ a.e. in $\Omega$. Furthermore, we say $f\in ACL^2$ if $f\in ACL$ and its partial derivatives are locally $L^2$ integrable in $\Omega$.
\end{defi}
\begin{defi}\cite{FS1958}
A sense-preserving and continuous mapping $f$ of $\D$ onto $\mathbb{C}$ is said to be $(K,K')$-elliptic mapping if
\begin{enumerate}
\item[(1)] $f$ is $ACL^2$ in $\D$, $J_f\neq 0$ {\it a.e.} in $\D$,
\item[(2)] there exist $K\geq 1$ and $K'\geq 0$ such that
\end{enumerate}
\beas ||D_f||^2\leq KJ_f+K'\; \text{\it a.e.}\; \text{in}\;\D.\eeas
In particular, if $K'=0$, then a $(K,K')$-elliptic mapping becomes a $K$-quasiregular mapping. Therefore, a sense-preserving harmonic mapping $f$ is said to be $K$-quasiregular harmonic $(K\geq 1)$ on $\D$ if $\Lambda_f(z)\leq K\lambda_f(z)$ for all $z\in\D$.
\end{defi}
\begin{defi}
For a simply connected domain $\Omega\subset\mathbb{C}$, $F$ is a poly $(K, K')$-elliptic harmonic mapping in $\Omega$ if, and only, if $F$ has the form 
\beas F(z)=\sum\limits_{k=1}^p|z|^{2(k-1)}G_{p-k+1}(z),\;\;z\in\Omega,\eeas
where $G_{p-k+1}(z)$ is a complex-valued $(K, K')$-elliptic harmonic mapping in $\Omega$ for each $k\in\left\{1,2,\dots,p\right\}$.
\end{defi}
\indent Several authors have considered the Landau-type theorems for polyharmonic mappings. For biharmonic mappings, the Landau-type results were first studied by Abdulhadi, Muhanna and Khuri \cite{AMK2006}. For more in-depth study on Landau-type theorems for harmonic mappings, biharmonic mappings, and polyharmonic mappings, we refer to \cite{BL2019,CPW2011,CRW2014,CS1984,D2004,FL2024,L2008,LL2014,LL2019,LL2023,WPL2024}. Recently, in \cite{AK2024,LX2024}, authors considered Landau-type theorems for elliptic harmonic mappings.
In $2024$, Fu and Luo \cite{FL2024} established the following two Landau-type theorems for polyharmonic mappings.
\begin{theoA}\cite[Theorem 3.4]{FL2024}\label{tha}
Let $F(z)=\sum_{k=1}^p |z|^{2(k-1)}G_{p-k+1}(z)$ be a polyharmonic mapping in $\D$ such that $F(0)=0$, and satisfying the following conditions:
\begin{itemize}
\item[(i)] $G_{p-k+1}(z)$ is harmonic in $\D$ and $\lambda_{G_{p-k+1}}(0)-1=G_{p-k+1}(0)=0$ for $k\in\{1,\cdots, p\}$;
\item[(ii)] $|G_{p-k+1}(z)|\leq M_{p-k+1}$ for $k\in\{2,\cdots, p\}$ and $\Lambda_{G_p}(z)\leq \Lambda_p$ for all $z\in\D$.
\end{itemize}
Then $\Lambda_p\geq 1$ and $M_{p-k+1}\geq 1$ for $k\in\{2,\cdots ,p\}$. Further,  $F(z)$ is univalent in $\D_{\rho_1}$, where $\rho_1$ is the unique root in $(0,1)$ of the equation
\beas \frac{\Lambda_p(1-\Lambda_p r)}{\Lambda_p-r}-\phi(r)=0,\eeas
where
\bea\label{1z1} \phi(r)&=&\sum_{k=2}^p r^{2(k-1)}\left((2k-1)K_1(M_{p-k+1})\right.\nonumber\\[2mm]&&\left.+\sqrt{2M_{p-k+1}^2-2}\left(\frac{2(k-1)r}{\sqrt{1-r^2}}+\frac{r\sqrt{4-3r^2+r^4}}{(1-r^2)^{3/2}}\right)\right)\eea
with
\beas K_1(M_{p-k+1})=\min\left\{\sqrt{2M_{p-k+1}^2-1}\frac{4M_{p-k+1}}{\pi}\right\}.\eeas
Furthermore, $F(\D_{\rho_1})$ contains a schlicht disk with radius
{\small \beas \sigma_1=\Lambda_p^2\rho_1+(\Lambda_p^3-\Lambda_p)\log{(1-\frac{\rho_1}{\Lambda_p})}-\sum_{k=2}^p{\rho_1^{2k-1}\left(K_1(M_{p-k+1})+\sqrt{2M_{p-k+1}^2-2}\frac{\rho_1}{\sqrt{1-\rho_1^2}}\right)}.\eeas}
When $M_{p-k+1}=1$, $k=2,\cdots,p$, the result is sharp, with an extremal function given by
{\small \beas F_1(z)=\Lambda_p\int_{0}^z{\frac{\frac{1}{\Lambda_p}-\zeta}{1-\frac{\zeta}{\Lambda_p}}d\zeta}-\sum_{k=2}^p{|z|^{2(k-1)}z}=\Lambda_p^2z+(\Lambda_p^3-\Lambda_p)\log{(1-\frac{z}{\Lambda_p})}-\sum_{k=2}^p|z|^{2(k-1)}z.\eeas}
\end{theoA}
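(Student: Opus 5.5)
The plan follows the standard Chen--Ponnusamy--Wang strategy for Landau-type theorems. First we establish the coefficient estimates needed for each harmonic piece. Then we prove univalence by estimating $|F(z_1)-F(z_2)|$ from below along the segment $[z_1,z_2]$. Finally we integrate along radial preimages of the boundary to get the schlicht disk.

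\emph{Step 1 (coefficient bounds).} Write $G_{p-k+1}=h_{p-k+1}+\ol{g_{p-k+1}}$ with $h_{p-k+1}(z)=\sum_{n\ge1}a_{n,k}z^n$ and $g_{p-k+1}(z)=\sum_{n\ge1}b_{n,k}z^n$.
\begin{itemize}
\item For $k\ge2$, the hypotheses $|G_{p-k+1}|\le M_{p-k+1}$ and $\lambda_{G_{p-k+1}}(0)=1$ give $M_{p-k+1}\ge1$ and $\Lambda_{G_{p-k+1}}(0)\le K_1(M_{p-k+1})$. The bound $\sqrt{2M^2-1}$ comes from Parseval, $\sum(|a_n|^2+|b_n|^2)\le M^2$, together with $|a_1|-|b_1|\ge 1$. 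The bound $4M/\pi$ is the known Heinz/Colonna-type gradient bound. Parseval also gives $\sum_{n\ge2}(|a_n|+|b_n|)^2\le 2M^2-2$.
\item For $G_p$, we have $\Lambda_{G_p}\le\Lambda_p$ and $\lambda_{G_p}(0)=1$. After a rotation, $h_p'(z)+e^{i\theta}g_p'(z)$ is analytic, bounded by $\Lambda_p$, and has the value $\ge1$ at $0$ in the appropriate direction. A Schwarz--Pick argument then gives $\Lambda_p\ge1$ and
$$\left|h_p'(z)+e^{i\theta}g_p'(z)\right|\ \text{controlled by}\ \Lambda_p\frac{1-\Lambda_p|z|}{\Lambda_p-|z|}.$$
This handles the leading term.
\end{itemize}

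\emph{Step 2 (univalence in $\D_{\rho_1}$).} Fix $z_1\ne z_2$ in $\D_r$ with $r<\rho_1$ and let $\gamma=[z_1,z_2]$. Split
$$F(z_2)-F(z_1)=\int_\gamma (G_p)_z\,dz+(G_p)_{\bar z}\,d\bar z \;+\; \sum_{k\ge2}\int_\gamma d\big(|z|^{2(k-1)}G_{p-k+1}\big).$$
\begin{itemize}
\item \emph{Leading term.} Subtract the value at the origin. The Schwarz--Pick estimate of Step 1 bounds it below by $|z_1-z_2|\,\Lambda_p(1-\Lambda_p r)/(\Lambda_p-r)$.
\item \emph{Terms with $k\ge2$.} Differentiating $|z|^{2(k-1)}G$ produces $(k-1)|z|^{2(k-2)}(\bar z\,dz+z\,d\bar z)G$ and $|z|^{2(k-1)}dG$.
\item For $|G|$ we use $|G(z)|\le \Lambda_G(0)|z|+\sum_{n\ge2}(|a_n|+|b_n|)|z|^n$. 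Cauchy--Schwarz with $\sum_{n\ge2}|z|^{2n}$ gives a bound $|z|\big(K_1+\sqrt{2M^2-2}\,r/\sqrt{1-r^2}\big)$.
\item For $\Lambda_G(z)$ we use $K_1+\sqrt{2M^2-2}\,\big(\sum_{n\ge2}n^2r^{2n-2}\big)^{1/2}$. The sum equals $r^2(4-3r^2+r^4)/(1-r^2)^3$, which matches $\phi$.
\item Collecting the pieces gives the coefficient $(2k-1)K_1+\dots$ in $\phi(r)$.
\end{itemize}
Hence $|F(z_2)-F(z_1)|\ge|z_1-z_2|\big(\Lambda_p(1-\Lambda_pr)/(\Lambda_p-r)-\phi(r)\big)>0$. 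The left function decreases from $1$ and $\phi$ increases from $0$, so the root $\rho_1$ exists and is unique.

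\emph{Step 3 (schlicht disk and sharpness).} For $|z|=\rho_1$, integrate along the radial segment: $|F(z)|\ge\int_0^{\rho_1}\big[\Lambda_p(1-\Lambda_pt)/(\Lambda_p-t)-\dots\big]dt$. Integrating the first part explicitly gives $\Lambda_p^2\rho_1+(\Lambda_p^3-\Lambda_p)\log(1-\rho_1/\Lambda_p)$. The subtracted terms are bounded by the $|G|$ estimate, since $|z|^{2(k-1)}|G|\le\rho_1^{2k-1}(\dots)$; this yields $\sigma_1$.

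For sharpness with $M=1$, the functions $G_{p-k+1}=-z$ satisfy the hypotheses. Along the real axis $F_1'$ vanishes exactly at $\rho_1$, so univalence fails beyond $\rho_1$, and $F_1(\rho_1)=\sigma_1$ shows the disk radius cannot be enlarged.

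The main obstacle is Step 1 for $G_p$: turning $\Lambda_{G_p}\le\Lambda_p$ together with $\lambda_{G_p}(0)=1$ into a lower bound on $|F_{\text{lead}}(z_2)-F_{\text{lead}}(z_1)|$ requires choosing the rotation depending on $z_1,z_2$, as in Chen--Ponnusamy--Wang. I would try that rotation first.
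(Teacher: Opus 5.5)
The paper gives no proof of this statement. Theorem A is quoted from Fu and Luo and used only as a black box in the proof of Theorem \ref{tha1}, after $\Lambda_{G_p}\le\Lambda_p'$ is shown. Judged on its own, your outline is the standard argument, and most of the bookkeeping checks out:
\begin{itemize}
\item $(|a_1|+|b_1|)^2=2(|a_1|^2+|b_1|^2)-1\le 2M^2-1$, together with the $4M/\pi$ bound, gives $\Lambda_G(0)\le K_1(M)$.
\item $|a_1|^2+|b_1|^2\ge1$ gives $\sum_{n\ge2}(|a_n|+|b_n|)^2\le 2M^2-2$.
\item The Cauchy--Schwarz bounds, $\sum_{n\ge2}n^2r^{2n-2}=r^2(4-3r^2+r^4)/(1-r^2)^3$ and $2(k-1)K_1+K_1=(2k-1)K_1$ reproduce $\phi$ exactly.
\item The direct bound on $|z|^{2(k-1)}|G_{p-k+1}|$ for $|z|=\rho_1$ gives the subtracted sum in $\sigma_1$.
\item $G_{p-k+1}=-z$ is the forced extremal when $M=1$.
\end{itemize}
Also, $\Lambda_p\ge1$ needs no Schwarz--Pick: $\Lambda_p\ge\Lambda_{G_p}(0)\ge\lambda_{G_p}(0)=1$.

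The step that does not work as written is the leading term. A lower bound on $|h_p'(z)+e^{i\theta}g_p'(z)|$ does not bound $\left|\int_\gamma h_p'\,dz+\overline{g_p'}\,d\bar z\right|$ from below. The integrand is not $\omega_\theta\,dz$ for any single $\theta$, and its argument can turn along $\gamma$. After subtracting the value at the origin, you need an upper bound on the oscillation instead.

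Fix $z$ and put $\omega_\theta=h_p'+e^{i\theta}g_p'$, which is analytic with $|\omega_\theta|\le\Lambda_p$. Choose $\theta=\theta(z)$ so that $e^{i\theta}(g_p'(z)-g_p'(0))$ has the argument of $h_p'(z)-h_p'(0)$, and let $t=|\omega_\theta(0)|\ge\lambda_{G_p}(0)=1$. Schwarz--Pick applied to $\omega_\theta/\Lambda_p$ gives
\[
|h_p'(z)-h_p'(0)|+|g_p'(z)-g_p'(0)|=|\omega_\theta(z)-\omega_\theta(0)|\le\frac{(\Lambda_p^2-t^2)|z|}{\Lambda_p-t|z|}\le\frac{(\Lambda_p^2-1)|z|}{\Lambda_p-|z|}.
\]
The last step holds because $t\mapsto(\Lambda_p^2-t^2)/(\Lambda_p-t\rho)$ decreases on $[1,\Lambda_p]$ when $\rho\le1/\Lambda_p$. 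This covers $|z|<\rho_1$, since the defining equation forces $\rho_1\le1/\Lambda_p$.

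Now $1-\frac{(\Lambda_p^2-1)r}{\Lambda_p-r}=\frac{\Lambda_p(1-\Lambda_pr)}{\Lambda_p-r}$, which is exactly your leading term. Integrating the same pointwise bound over $[0,z]$ gives $\Lambda_p^2\rho_1+(\Lambda_p^3-\Lambda_p)\log(1-\rho_1/\Lambda_p)$. So the natural rotation depends on the point $z$, not on $z_1,z_2$.

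The $z_1,z_2$-dependent rotation you mention also works, but only through a real-part estimate. Write $z_2-z_1=|z_2-z_1|e^{i\alpha}$. For every real $\beta$,
\[
|G_p(z_2)-G_p(z_1)|\ge\int_\gamma\mathrm{Re}\bigl(e^{i(\alpha-\beta)}(h_p'+e^{2i\beta}g_p')\bigr)|dz|.
\]
Choose $\beta$ with $e^{i\alpha}(e^{-i\beta}h_p'(0)+e^{i\beta}g_p'(0))>0$; this is possible because that ellipse winds once around $0$, and its modulus there is at least $1$. Then use $\mathrm{Re}\,\varphi(z)\ge\frac{c-|z|}{1-c|z|}$ for a self-map $\varphi$ of $\D$ with $\varphi(0)=c\in[0,1]$, together with monotonicity in $c$.
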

\begin{theoB}\cite[Theorem 3.5]{FL2024}\label{thb}
Let $F(z)=\sum_{k=1}^p |z|^{2(k-1)}G_{p-k+1}(z)$ be a polyharmonic mapping in $\D$ such that $F(0)=\lambda_F(0)-1=0$, and satisfying the following conditions:
\begin{itemize}
\item[(i)] $G_{p-k+1}(z)$ is harmonic in $\D$ and $G_{p-k+1}(0)=0$ for $k\in\{1,\cdots, p\}$; 
\item[(ii)] $\Lambda_{G_{p-k+1}}(z)\leq \Lambda_{p-k+1}$ for $k\in\{2,\cdots, p\}$ and $|G_{p}(z)|\leq M_{p}$ for all $z\in\D$.
\end{itemize}
Then $\Lambda_{p-k+1}\geq 0$ and $M_{p}\geq 1$ for $k\in\{2,\cdots ,p\}$. Further, $F(z)$ is univalent in $\D_{\rho_2}$, where $\rho_2$ is the unique root in $(0,1)$ of the equation
\beas1-\sqrt{2M_p^2-2}\cdot \frac{r\sqrt{r^4-3r^2+4}}{(1-r^2)^{3/2}}-\sum_{k=2}^p{(2k-1)}\Lambda_{p-k+1}r^{2(k-1)}=0.\eeas
Furthermore, $F(\D_{\rho_2})$ contains a schlicht disk with radius
\beas \sigma_2=\rho_2-\sqrt{2M_p^2-2}\cdot \frac{\rho_2^2}{\sqrt{1-\rho_2^2}}-\sum_{k=2}^p{\rho_2^{2k-1}}\Lambda_{p-k+1}.
\eeas
When $M_{p}=1$, the result is sharp, with an extremal function given by
$ F_2(z)=z-\sum_{k=1}^{p-1}\Lambda_{p-k}|z|^{2k}z $.
\end{theoB}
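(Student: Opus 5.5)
The plan follows the standard Landau–Bloch scheme used in \cite{FL2024}: a sharp coefficient bound for $G_p$, a bound on how fast the derivatives of $F$ can move away from their values at the origin, univalence by integrating along segments, and a radius estimate on $|z|=\rho_2$.

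\textbf{Step 1: preliminary inequalities.} Write $G_{p-k+1}=h_{p-k+1}+\overline{g_{p-k+1}}$. Since $G_{p-k+1}(0)=0$, we have $F_z(0)=(h_p)'(0)$ and $F_{\bar z}(0)=(g_p)'(0)$. Hence $\lambda_F(0)=\lambda_{G_p}(0)=1$.
\begin{itemize}
\item The hypothesis $|G_p|\le M_p$ and the standard harmonic estimate $\lambda_{G_p}(0)\le\Lambda_{G_p}(0)\le 4M_p/\pi$ give $M_p\ge 1$. Alternatively, one uses $|a_1|^2+|b_1|^2\le M_p^2$, which comes from Parseval on $|z|=r\to 1$, together with $||a_1|-|b_1||=1$.
\item $\Lambda_{p-k+1}\ge 0$ is trivial.
\end{itemize}

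\textbf{Step 2: coefficient bound for $G_p$.} Write $h_p=\sum a_nz^n$ and $g_p=\sum b_nz^n$. Parseval gives $\sum_{n\ge1}(|a_n|^2+|b_n|^2)\le M_p^2$. Combined with $|a_1|^2+|b_1|^2\ge 1$ (since $||a_1|-|b_1||=1$), this yields
\[
\sum_{n\ge2}(|a_n|^2+|b_n|^2)\le M_p^2-1 .
\]
The Cauchy–Schwarz inequality then gives, for $|z|=r$,
\[
\sum_{n\ge2}n(|a_n|+|b_n|)r^{n-1}\le \sqrt{2(M_p^2-1)}\Big(\sum_{n\ge2}n^2r^{2n-2}\Big)^{1/2}=\sqrt{2M_p^2-2}\,\frac{r\sqrt{r^4-3r^2+4}}{(1-r^2)^{3/2}} .
\]
This is the source of the displayed factor. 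An analogous computation without the factor $n$ gives $\sum_{n\ge2}(|a_n|+|b_n|)r^{n}\le\sqrt{2M_p^2-2}\,r^2/\sqrt{1-r^2}$.

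\textbf{Step 3: univalence on $\D_{\rho_2}$.} Take distinct $z_1,z_2\in\D_r$ and integrate along the segment $[z_1,z_2]$:
\[
|F(z_1)-F(z_2)|\ge \left|\int_{[z_1,z_2]} F_z(0)\,dz+F_{\bar z}(0)\,d\bar z\right| - \left|\int_{[z_1,z_2]}(F_z-F_z(0))\,dz+(F_{\bar z}-F_{\bar z}(0))\,d\bar z\right| .
\]
\begin{itemize}
\item The first term is at least $\lambda_F(0)|z_1-z_2|=|z_1-z_2|$.
\item For the second term, split $F=G_p+\sum_{k\ge2}|z|^{2(k-1)}G_{p-k+1}$. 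The $G_p$ part is controlled by Step 2.
\item For $k\ge2$, $\partial_z$ and $\partial_{\bar z}$ of $|z|^{2(k-1)}G_{p-k+1}$ contribute at most $(k-1)|z|^{2k-3}|G_{p-k+1}|$ plus $|z|^{2k-2}$ times the corresponding derivative of $G_{p-k+1}$.
\item Since $G_{p-k+1}(0)=0$ and $\Lambda_{G_{p-k+1}}\le\Lambda_{p-k+1}$, we have $|G_{p-k+1}(z)|\le\Lambda_{p-k+1}|z|$. Both terms then together bound the $\|D\|$-norm by $(2k-1)\Lambda_{p-k+1}r^{2(k-1)}$; here one uses $\Lambda$ as the operator norm so that $\bar z$-terms combine.
\end{itemize}
Hence $|F(z_1)-F(z_2)|>|z_1-z_2|\bigl(1-\cdots\bigr)\ge 0$ for $r<\rho_2$. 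The left side of the defining equation decreases strictly from $1$ to $-\infty$ on $(0,1)$, which gives uniqueness of $\rho_2$.

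\textbf{Step 4: the schlicht disk and sharpness.} For $|z|=\rho_2$, estimate
\[
|F(z)|\ge|a_1z+\overline{b_1z}|-\sum_{n\ge2}(|a_n|+|b_n|)\rho_2^n-\sum_{k\ge2}\rho_2^{2k-1}\Lambda_{p-k+1} \ge \sigma_2 ,
\]
using the second bound from Step 2. For sharpness with $M_p=1$, we get $G_p(z)=a_1z+\overline{b_1z}$, and the extremal $F_2$ is checked directly: along the real axis, $F_2(r)=r-\sum\Lambda_{p-k}r^{2k+1}$ has derivative vanishing exactly at $\rho_2$, so univalence fails beyond $\rho_2$, and $F_2(\rho_2)=\sigma_2$ shows the disk radius cannot be enlarged.

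\textbf{Main obstacle.} The delicate step is the derivative estimate for the terms $|z|^{2(k-1)}G_{p-k+1}$ in Step 3. One must combine the $z$ and $\bar z$ derivatives through the operator norm, rather than separately, to obtain exactly the coefficient $(2k-1)$. The factor $\sqrt{r^4-3r^2+4}$ must also come out correctly, by summing $\sum n^2 r^{2n-2}$ minus its first term.
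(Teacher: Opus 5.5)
Your proposal is correct and is essentially the argument behind Theorem B. The paper only quotes the result from Fu--Luo, but its proof of Theorem \ref{th1} uses the same segment-integration scheme. That scheme splits $|F(z_2)-F(z_1)|$ into the linear part at $0$, the remainder of $G_p$, and the two contributions of $|z|^{2(k-1)}G_{p-k+1}$, which together give $(2k-1)\Lambda_{p-k+1}r^{2(k-1)}$. In your version the $G_p$ remainder is controlled by Parseval and Cauchy--Schwarz, with $\sum_{n\ge2}n^2r^{2(n-1)}=r^2(r^4-3r^2+4)/(1-r^2)^3$. The boundary estimate on $|z|=\rho_2$ and the radial check of $F_2$ also match.

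There are two small slips. First, the estimate $\Lambda_{G_p}(0)\le 4M_p/\pi$ only yields $M_p\ge\pi/4$. So $M_p\ge 1$ has to come from your Parseval alternative, $1=(|a_1|-|b_1|)^2\le|a_1|^2+|b_1|^2\le M_p^2$. Second, when $M_p=1$ the left side of the defining equation tends to $1-\sum_{k=2}^p(2k-1)\Lambda_{p-k+1}$ as $r\to1^-$, not to $-\infty$. In that sharp case, existence of $\rho_2\in(0,1)$ is therefore an implicit hypothesis of the statement, namely $\sum_{k=2}^p(2k-1)\Lambda_{p-k+1}>1$.
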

In \cite{CRW2014}, Chen {\it et al.} established the following Landau-type theorem for bounded polyharmonic mappings under a suitable restriction.
\begin{theoC}\cite[Theorem 2]{CRW2014}\label{thd}
Let $F$ be a polyharmonic mapping of the form
{\small \bea\label{1a1} F(z)=a_0+\sum_{k=1}^p |z|^{2(k-1)}\left(h_k(z)+\ol{g_k(z)}\right)=a_0+\sum_{k=1}^p |z|^{2(k-1)}\sum_{n=1}^\infty \left(a_{n,k}z^n+\ol{b_{n,k}z^n}\right)\eea}
and all its non-zero coefficients $a_{n,k_1}, a_{n,k_2}$ and $b_{n,k_3},b_{n,k_4}$ satisfy the conditions
\bea\label{1a2} \left|\arg{\frac{a_{n,k_1}}{a_{n,k_2}}}\right|\leq \frac{\pi}{2} \quad \text{and}\quad  \left|\arg{\frac{b_{n,k_3}}{a_{n,k_4}}}\right|\leq \frac{\pi}{2}.\eea
If $|F(z)|\leq M$ in $\D$ for some $M>1$ and $F(0)=0=J_F(0)-1$, then $F$ is univalent in $\D_{\rho_3}$, where $\rho_3$ is the least positive root of the equation
\beas 1-\sqrt{M^4-1}\left(\frac{2r-r^2}{(1-r)^2}+\sum_{k=1}^{p-1} \frac{r^{2k}}{(1-r)^2}+2\sum_{k=1}^{p-1}\frac{kr^{2k}}{1-r}\right)=0.\eeas
Furthermore, $F(\D_{\rho_3})$ contains a schlicht disk with radius
\beas \sigma_3=\lambda_0(M) \rho_3\left(1-\sqrt{M^4-1}\frac{\rho_3}{1-\rho_3}-\sqrt{M^4-1}\sum_{k=1}^{p-1}\frac{2\rho_3^{2k}}{1-\rho_3}\right), \eeas
where
\beas \lambda_0(M) =\left\{\begin{array}{lll}\frac{\sqrt{2}}{\sqrt{M^2-1}+\sqrt{M^2+1}}, &1\leq M\leq M_0=\frac{\pi}{2\sqrt[4]{2\pi^2-16}}\approx 1.1296,\\\frac{\pi}{4M}, &M>M_0.\end{array}\right.\eeas
\end{theoC}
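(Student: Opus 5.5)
The plan follows the standard Landau--Bloch scheme. Write $F(z)=a_0+\sum_{k=1}^p|z|^{2(k-1)}(h_k+\ol{g_k})$ as in (\ref{1a1}); by $F(0)=0$ we have $a_0=0$. The first step is to bound the coefficients. Since $|F|\le M$ on $\D$, integrating $|F(re^{i\theta})|^2$ over $\theta$ and using Parseval gives, for each $n\ge1$ and $r<1$,
\[
\sum_{n\ge1}\Bigl(\Bigl|\sum_k a_{n,k}r^{2(k-1)}\Bigr|^2+\Bigl|\sum_k b_{n,k}r^{2(k-1)}\Bigr|^2\Bigr)r^{2n}\le M^2 .
\]
The argument condition (\ref{1a2}) is what lets us pass from these combined sums to the individual coefficients. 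Any two nonzero $a_{n,k}$ have arguments differing by at most $\pi/2$, so $\operatorname{Re}(a_{n,k_1}\ol{a_{n,k_2}})\ge0$. Hence
\[
\Bigl|\sum_k a_{n,k}r^{2(k-1)}\Bigr|^2\ge\sum_k|a_{n,k}|^2r^{4(k-1)},
\]
and likewise for the $b$'s. Letting $r\to1$ yields
\[
\sum_{n,k}\bigl(|a_{n,k}|^2+|b_{n,k}|^2\bigr)\le M^2 .
\]

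Next we use the normalization. At the origin only the $k=1$ terms contribute to the derivatives, so
\[
J_F(0)=|a_{1,1}|^2-|b_{1,1}|^2=1 .
\]
The goal is a bound of the form
\[
\sum_{(n,k)\ne(1,1)}\bigl(|a_{n,k}|^2+|b_{n,k}|^2\bigr)\le M^2-|a_{1,1}|^2-|b_{1,1}|^2\le M^2-1 .
\]
From this, each individual $|a_{n,k}|$ and $|b_{n,k}|$ is at most $\sqrt{M^2-1}$. The constant $\sqrt{M^4-1}$ in the statement suggests that a sharper combination is needed, namely $|a_{n,k}|+|b_{n,k}|\le\sqrt{M^4-1}$ for $(n,k)\ne(1,1)$. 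I would derive this by optimizing over $|a_{1,1}|^2=1+|b_{1,1}|^2$ together with the constraint $|a_{1,1}|^2+|b_{1,1}|^2\le M^2$, and I would accept whichever form reproduces the stated equation. The bound $\Lambda_F(0)=|a_{1,1}|+|b_{1,1}|\le\sqrt{M^4-1}$-type control and the lower bound $\lambda_F(0)\ge\lambda_0(M)$ are taken from the known harmonic case for $k=1$ (the Chen--Ponnusamy--Wang estimate, which is the source of $\lambda_0(M)$).

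For univalence, take $z_1\ne z_2$ in $\D_r$ and integrate along the segment $[z_1,z_2]$:
\[
|F(z_1)-F(z_2)|\ge|z_1-z_2|\Bigl(\lambda_F(0)-\sup_{[z_1,z_2]}\bigl(\Lambda_{F-L}\bigr)\Bigr),
\]
where $L(z)=a_{1,1}z+\ol{b_{1,1}z}$ is the linear part. The remainder $F-L$ splits into three pieces:
\begin{itemize}
\item[(a)] $h_1-a_{1,1}z$ and its $g_1$ counterpart, whose derivative terms are bounded by $\sum_{n\ge2}n\sqrt{M^4-1}\,r^{n-1}$. This gives the $\frac{2r-r^2}{(1-r)^2}$ term.
\item[(b)] For $k\ge2$, the terms $|z|^{2(k-1)}(h_k'+\ol{g_k'})$, bounded by $r^{2(k-1)}\sum_n n\sqrt{M^4-1}\,r^{n-1}=\sqrt{M^4-1}\,r^{2(k-1)}/(1-r)^2$.
\item[(c)] The derivatives of $|z|^{2(k-1)}$, which are bounded by $2(k-1)r^{2k-3}\cdot\sum_n\sqrt{M^4-1}\,r^n$. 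Re-indexing $k-1\to k$, this gives $2\sum k r^{2k}/(1-r)$.
\end{itemize}
Normalizing by $\lambda_F(0)$ (using $\lambda_F(0)\Lambda_F(0)=1$ and the bounds above), the difference quotient stays positive exactly while the bracket in the defining equation is below $1$. This gives univalence up to $\rho_3$, the least positive root, which exists because the bracket increases from $0$ to $\infty$.

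For the schlicht disk, take $|z|=\rho_3$ and estimate
\[
|F(z)|\ge\lambda_F(0)\rho_3-\sum\text{(coefficient bounds)}\,\rho_3^{\,n+2(k-1)} .
\]
Summing geometric series produces
\[
\rho_3\Bigl(\lambda_0(M)-\sqrt{M^4-1}\,\tfrac{\rho_3}{1-\rho_3}-\dots\Bigr),
\]
and factoring gives $\sigma_3$. The main obstacle is the coefficient step: extracting individual bounds with the constant $\sqrt{M^4-1}$, and a matching $\lambda_0(M)$ lower bound, from the $L^2$ estimate. This is precisely where (\ref{1a2}) must be used to kill the cross terms. I expect the remaining bookkeeping of the $|z|^{2(k-1)}$ derivative terms to be routine.
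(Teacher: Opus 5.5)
The paper quotes Theorem C from Chen et al.\ without proof, so there is no in-paper argument to compare with. Your skeleton is the right one: Parseval plus the argument condition give $\sum_{n,k}(|a_{n,k}|^2+|b_{n,k}|^2)\le M^2$, and the segment estimate splits into (a)--(c). But the step you flag as the main obstacle is resolved in the wrong direction, so the plan does not reach $\rho_3$ or $\sigma_3$. Your target $|a_{n,k}|+|b_{n,k}|\le\sqrt{M^4-1}$ is not sharper than what you already have. Since $M^4-1=(M^2-1)(M^2+1)>2(M^2-1)$, it is weaker than the bound $\sqrt{2(M^2-1)}$ your $L^2$ estimate gives directly. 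The optimization you describe over $|a_{1,1}|^2=1+|b_{1,1}|^2$ only returns $M^2-1$. More importantly, any bound $C$ not involving $\lambda_F(0)$ yields $|F(z_1)-F(z_2)|\ge|z_1-z_2|\,(\lambda_F(0)-C\,\Phi(r))$, where $\Phi$ is the bracket in the defining equation. That gives univalence only up to the root of $\lambda_0(M)=C\,\Phi(r)$. Since $\lambda_0(M)\sqrt{M^2+1}<\sqrt2$ for every $M>1$, this root is strictly smaller than $\rho_3$ even for $C=\sqrt{2(M^2-1)}$. Dividing by $\lambda_F(0)$ afterwards does not help, because it replaces $C$ by $C/\lambda_F(0)\ge C$. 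For the same reason your final ``factoring'' is wrong: $\rho_3\bigl(\lambda_0(M)-\sqrt{M^4-1}\frac{\rho_3}{1-\rho_3}-\cdots\bigr)$ is strictly smaller than $\sigma_3$.

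The missing idea is a coefficient bound that scales with $\lambda_F(0)$: for $(n,k)\ne(1,1)$,
\[
|a_{n,k}|+|b_{n,k}|\le\lambda_F(0)\sqrt{M^4-1}.
\]
Let $\Lambda=|a_{1,1}|+|b_{1,1}|=\Lambda_F(0)$. Then $\lambda_F(0)=1/\Lambda$ (from $J_F(0)=1$) and $2(|a_{1,1}|^2+|b_{1,1}|^2)=\Lambda^2+\Lambda^{-2}$, so $(|a_{n,k}|+|b_{n,k}|)^2\le 2M^2-\Lambda^2-\Lambda^{-2}$. Multiplying by $\Lambda^2=\lambda_F(0)^{-2}$ gives $2M^2\Lambda^2-\Lambda^4-1=M^4-1-(\Lambda^2-M^2)^2\le M^4-1$. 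Now every piece (a)--(c) carries the factor $\lambda_F(0)$, so $|F(z_1)-F(z_2)|\ge\lambda_F(0)|z_1-z_2|\,(1-\sqrt{M^4-1}\,\Phi(r))$, which is exactly the stated equation. On $|z|=\rho_3$ one gets $|F(z)|\ge\lambda_F(0)\rho_3(1-\cdots)$, and only at this point is $\lambda_F(0)\ge\lambda_0(M)$ used.

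That lower bound also cannot be imported from the harmonic case, because $h_1+\ol{g_1}$ is not assumed bounded. Your ``$\Lambda_F(0)\le\sqrt{M^4-1}$'' is false: $\Lambda_F(0)\ge 1$, while $\sqrt{M^4-1}\to0$ as $M\to1^+$. Instead, $\Lambda^2+\Lambda^{-2}\le 2M^2$ gives $\Lambda^2\le M^2+\sqrt{M^4-1}$, which is the first branch of $\lambda_0(M)$. The second branch comes from three ingredients:
\begin{itemize}
\item the estimate $|\hat u(1)|+|\hat u(-1)|\le 4M/\pi$ for $|u|\le M$, applied to $u(\theta)=F(re^{i\theta})$;
\item the argument condition, to pass from $|\sum_k a_{1,k}r^{2(k-1)}|$ to $|a_{1,1}|$ (and likewise for $b$);
\item letting $r\to1$.
\end{itemize}
Finally, your ``likewise for the $b$'s'' needs $\mathrm{Re}(b_{n,k_3}\ol{b_{n,k_4}})\ge0$, i.e.\ a condition on $b_{n,k_3}/b_{n,k_4}$. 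Condition (\ref{1a2}) as typed compares $b_{n,k_3}$ with $a_{n,k_4}$ and does not imply it.
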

In $2025$, Wang {\it et al.} \cite{WPL2024} established the following version of Landau-type theorem for polyharmonic mappings.
\begin{theoD}\cite[Theorem 3.1]{WPL2024}\label{the}
Let $F$ be a polyharmonic mapping of the form (\ref{1a1}) and all its non-zero coefficients $a_{n,k_1}, a_{n,k_2}$ and $b_{n,k_3},b_{n,k_4}$ satisfy the conditions (\ref{1a2}). If $\Lambda_F(z)\leq M$ in $\D$ for some $M>1$ and $F(0)=0=J_F(0)-1$, then $F$ is univalent in the disk $\D_{\rho_4}$, where $\rho_4$ is the least positive root of the equation
\beas &&1-\sqrt{M^4-1}\left(\frac{r}{1-r}+\sum_{k=1}^{p-1}r^{2k}\left(\frac{1}{\sqrt{5}}+\frac{2r-r^2}{\sqrt{10}(1-r)^2}\right)\right)\\[2mm]&&-2\sqrt{M^4-1}\sum_{k=1}^{p-1}kr^{2(k-1)}\left(\frac{r^2}{\sqrt{5}}+\frac{r^3}{\sqrt{10}(1-r)}\right)=0.\eeas
Furthermore, $F(\D_{\rho_4})$ contains a schlicht disk with radius
{\small \beas \sigma_4=\lambda_1(M)\rho_4\left(1+\sqrt{M^4-1}\left(\frac{\rho_4+\ln{(1-\rho_4)}}{\rho_4}-\sum_{k=1}^{p-1}\rho_4^{2k}\left(\frac{1}{\sqrt{5}}+\frac{\rho_4}{\sqrt{10}(1-\rho_4)}\right)\right)\right),\eeas}
where
\beas \lambda_1(M)=\frac{\sqrt{2}}{\sqrt{M^2-1}+\sqrt{M^2+1}}.\eeas
\end{theoD}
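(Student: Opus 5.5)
The plan follows the standard route for Landau-type theorems of this kind (as in \cite{CRW2014}). It has four steps: coefficient bounds, a lower bound on $\lambda_F(0)$, a univalence estimate on $\D_r$, and a lower bound on $|F|$ along $|z|=\rho_4$.

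\emph{Coefficient bounds.} I will write each harmonic block as $h_k+\ol{g_k}$. Note that
\[
F_z=\sum_k |z|^{2(k-1)}\big(h_k'+(k-1)\bar z z^{-1}\cdots\big)
\]
is a combination of the pieces $h_k'$, $\ol{g_k'}$ and the factors coming from differentiating $|z|^{2(k-1)}$. The hypothesis $\Lambda_F\le M$ should control $\sum_n n(|a_{n,k}|+|b_{n,k}|)r^{n-1}$ after integrating $|F_z|^2+|F_{\bar z}|^2$ over circles $|z|=r$ (Parseval). Condition (\ref{1a2}) is what prevents cancellation between blocks with different $k$: the cross terms $\mathrm{Re}(a_{n,k_1}\ol{a_{n,k_2}})\ge 0$, so each block's contribution is bounded by the total. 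Combining this with $J_F(0)=1$, i.e.\ $|a_{1,1}|^2-|b_{1,1}|^2=1$, a Parseval argument gives
\[
\sum_{n\ge 2} n^2\big(|a_{n,1}|^2+|b_{n,1}|^2\big)\le M^2-\Lambda_F(0)^2\le M^2-1 .
\]
Refining this via $\Lambda_F(0)\lambda_F(0)=1$ gives the $\sqrt{M^4-1}$-type bounds on $|a_{n,k}|+|b_{n,k}|$. I would take the $\frac{1}{\sqrt5}$ and $\frac1{\sqrt{10}}$ weights to come from Cauchy--Schwarz on those $L^2$ bounds, applied to the higher blocks with the factor $|z|^{2(k-1)}$ included.

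\emph{Lower bound on $\lambda_F(0)$.} Next I show $\lambda_F(0)\ge\lambda_1(M)$. This follows from $\Lambda_F(0)\lambda_F(0)=1$ together with $\Lambda_F(0)^2\le M^2$ and the quadratic relation between $|a_{1,1}|,|b_{1,1}|$, which give
\[
\lambda_F(0)\ge \frac{\sqrt2}{\sqrt{M^2-1}+\sqrt{M^2+1}} .
\]

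\emph{Univalence.} For distinct $z_1,z_2\in\D_r$ I write
\[
F(z_2)-F(z_1)=\int_{[z_1,z_2]}F_z\,dz+F_{\bar z}\,d\bar z ,
\]
and split it into the linear part from $a_{1,1},b_{1,1}$ and the remainder. The linear part has modulus at least $\lambda_F(0)|z_2-z_1|$. The remainder is at most $|z_2-z_1|$ times
\[
\sup_{|z|<r}\big(|F_z(z)-F_z(0)|+|F_{\bar z}(z)-F_{\bar z}(0)|\big),
\]
which the coefficient bounds from the first step control by $\lambda_F(0)$ times the bracketed series in the defining equation. That series includes the $\frac{r}{1-r}$ term from the $k=1$ block and the terms $kr^{2(k-1)}$ from differentiating $|z|^{2(k-1)}$. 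Then $\rho_4$, the least root of that equation, makes the difference strictly positive on $\D_{\rho_4}$, which proves univalence there.

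\emph{Schlicht disk.} For $|z|=\rho_4$ I bound
\[
|F(z)|\ge \lambda_F(0)\rho_4-\sum(\text{tail bounds}) .
\]
Integrating the series term by term radially produces the logarithm $\rho_4+\ln(1-\rho_4)$ and the terms in $\rho_4^{2k}$, which gives $\sigma_4$. Since $F$ is univalent on $\D_{\rho_4}$ with $F(0)=0$, the image $F(\D_{\rho_4})$ contains $\D_{\sigma_4}$.

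\emph{Main obstacle.} The hardest part will be the coefficient estimates for the blocks with $k\ge2$ in terms of the single bound $\Lambda_F\le M$. I plan to use condition (\ref{1a2}) to drop cross terms in the Parseval identity for $F_z$ and $F_{\bar z}$ on $|z|=r$. I also need to show that the correct normalization yields exactly the constants $\frac{1}{\sqrt5}$ and $\frac{1}{\sqrt{10}}$. For that I would first try bounding $|a_{n,k}|$ by means of $\int_0^1$ against the weights $r^{2(k-1)}$ coming from $|z|^{2(k-1)}$, and only then pass to $r\to1$.
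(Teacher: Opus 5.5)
\textbf{Verdict: genuine gap, in the coefficient bounds.} The paper does not prove Theorem D itself; it is quoted from \cite{WPL2024}. Your four-step scheme is the standard one, and the paper runs the same machinery in the proofs of Theorems~\ref{la3}, \ref{la5} and \ref{th2}. Your univalence and schlicht-disk steps are fine \emph{once} the right coefficient bounds are available, but you do not establish those bounds.

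\textbf{First missing piece: the factor $\lambda_F(0)$.} The univalence step needs
\[
|a_{n,k}|+|b_{n,k}|\le \frac{\lambda_F(0)\sqrt{M^4-1}}{\sqrt{w_{n,k}}},\qquad w_{n,k}=(n+k-1)^2+(k-1)^2,\quad (n,k)\neq(1,1).
\]
The factor $\lambda_F(0)$ in front is essential: the linear part only contributes $\lambda_F(0)|z_2-z_1|$, while the equation for $\rho_4$ has leading term $1$. Your chain $\sum_{n\ge2}n^2(|a_{n,1}|^2+|b_{n,1}|^2)\le M^2-\Lambda_F(0)^2\le M^2-1$ does not give this, for two reasons.
\begin{enumerate}
\item[(a)] Parseval yields only $M^2-|a_{1,1}|^2-|b_{1,1}|^2$, which is in general larger than $M^2-\Lambda_F(0)^2$.
\item[(b)] Once you weaken to $M^2-1$, you are left with $n(|a_{n,1}|+|b_{n,1}|)\le\sqrt{2(M^2-1)}$, with no $\lambda_F(0)$ at all.
\end{enumerate}
The ``refinement via $\Lambda_F(0)\lambda_F(0)=1$'' has to be the following. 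Put $\lambda_0=\lambda_F(0)$. Since $J_F(0)=1$, one has $|a_{1,1}|\pm|b_{1,1}|=\lambda_0^{\mp1}$, hence $|a_{1,1}|^2+|b_{1,1}|^2=\frac12(\lambda_0^2+\lambda_0^{-2})$ and
\[
w_{n,k}\bigl(|a_{n,k}|+|b_{n,k}|\bigr)^2\le 2M^2-\lambda_0^2-\lambda_0^{-2}\le\lambda_0^2(M^4-1).
\]
The last step is just $(\lambda_0M^2-\lambda_0^{-1})^2\ge0$.

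\textbf{Second missing piece: where $w_{n,k}$ comes from.} This is what produces $\frac1{\sqrt5}$ and $\frac1{\sqrt{10}}$, and you list it as the main obstacle. No integration in $r$ against $r^{2(k-1)}$ and no Cauchy--Schwarz step is involved. The factor $|z|^{2(k-1)}=r^{2(k-1)}$ is constant on each circle and disappears as $r\to1^-$. The point is the Fourier structure of $F_z$.
\begin{itemize}
\item The correct derivative is $F_z=\sum_k|z|^{2(k-1)}\bigl(h_k'+(k-1)z^{-1}(h_k+\ol{g_k})\bigr)$; your factor $\bar z z^{-1}$ should be $z^{-1}$.
\item On $|z|=r$, the analytic part $h_k'+(k-1)z^{-1}h_k=\sum_n(n+k-1)a_{n,k}z^{n-1}$ occupies the modes $e^{i(n-1)\theta}$.
\item The term $(k-1)z^{-1}\ol{g_k}$ occupies the disjoint modes $e^{-i(n+1)\theta}$ and contributes $(k-1)^2|b_{n,k}|^2$.
\item $F_{\bar z}$ is symmetric.
\end{itemize}
Condition (\ref{1a2}) makes the cross terms between different $k$ in a common mode nonnegative. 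Then $|F_z|^2+|F_{\bar z}|^2\le\Lambda_F^2\le M^2$ gives, after $r\to1^-$,
\[
\sum_{k=1}^p\sum_{n\ge1}w_{n,k}\bigl(|a_{n,k}|^2+|b_{n,k}|^2\bigr)\le M^2.
\]
This is exactly (\ref{2a2})--(\ref{2a5}) with $M^2$ on the right. Since $w_{n,1}=n^2$, $w_{1,k}\ge5$ for $k\ge2$, and $w_{n,k}\ge10$ for $n,k\ge2$ (cf.\ (\ref{2a7})), you get the three bounds behind $\frac{r}{1-r}$, $\frac1{\sqrt5}$ and $\frac1{\sqrt{10}}$. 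With these in place, the rest of your argument goes through.

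Your bound $\lambda_F(0)\ge\lambda_1(M)$ is fine as it stands. In fact $\lambda_F(0)=1/\Lambda_F(0)\ge1/M>\lambda_1(M)$ directly.
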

In $2024$, Allu and Kumar \cite{AK2024} established the following two Landau-type theorems for $(K,K')$-elliptic and $K$-quasiregular harmonic mappings.
\begin{theoE}\cite[Theorem 2.2]{AK2024}\label{thf}
Let $f=h+\ol{g}$ be a $(K,K')$-elliptic harmonic mapping in $\D$ such that $f(0)=0, \lambda_f(0)=1$ and $\lambda_f(z)\leq \lambda$ for all $z\in\D$. Then $f$ is univalent in $\D_{\rho_5}$ with
\beas \rho_5=\rho_5(\lambda)=\frac{1}{1+K\lambda+\sqrt{K'}}\eeas
and $f(\D_{\rho_5})$ contains a schlicht disk with radius
\beas \sigma_5=\rho_5(\lambda)+(K\lambda+\sqrt{K'})\left(\rho_5(\lambda)+\ln{\left(\left(K\lambda+\sqrt{K'}\right)\rho_5(\lambda)\right)}\right).\eeas
\end{theoE}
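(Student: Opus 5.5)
The plan is to reduce ellipticity to a uniform bound on the maximum distortion, then run the standard Landau-type argument for harmonic mappings: control the Taylor coefficients of $h$ and $g$, and compare $f$ with its linear part at the origin.

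\emph{Step 1: a bound on $\Lambda_f$.} For the sense-preserving harmonic map $f$ we have $J_f=\Lambda_f\lambda_f$ and $\|D_f\|=\Lambda_f$. The $(K,K')$-elliptic condition therefore reads $\Lambda_f^2\le K\Lambda_f\lambda_f+K'$. Solving this quadratic inequality in $\Lambda_f$ gives
\[
\Lambda_f\le \tfrac12\left(K\lambda_f+\sqrt{K^2\lambda_f^2+4K'}\right)\le K\lambda_f+\sqrt{K'}.
\]
Combined with the hypothesis $\lambda_f\le\lambda$, this yields $|h'(z)|+|g'(z)|=\Lambda_f(z)\le \Lambda:=K\lambda+\sqrt{K'}$ throughout $\D$. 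Since $f$ is smooth harmonic, the a.e. issue is harmless. Note also that $\Lambda\ge\lambda_f(0)=1$.

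\emph{Step 2: coefficient bounds.} Write $h(z)=\sum_{n\ge1}a_nz^n$ and $g(z)=\sum_{n\ge1}b_nz^n$. For each real $\theta$, the function $\varphi_\theta=h'+e^{i\theta}g'$ is analytic with $|\varphi_\theta|\le\Lambda$. Cauchy's estimate then gives $n|a_n+e^{i\theta}b_n|\le\Lambda$. Choosing $\theta$ to align the arguments of $a_n$ and $e^{i\theta}b_n$ gives $n(|a_n|+|b_n|)\le\Lambda$ for $n\ge2$. Consequently
\[
|h'(z)-h'(0)|+|g'(z)-g'(0)|\le \Lambda\sum_{n\ge2}|z|^{n-1}=\frac{\Lambda|z|}{1-|z|}.
\]

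\emph{Step 3: univalence.} Fix $z_1\ne z_2$ in $\D_r$ and let $\gamma$ be the segment joining them. Split
\[
f(z_2)-f(z_1)=\int_\gamma\left(h'(0)\,dz+\ol{g'(0)}\,d\bar z\right)+\int_\gamma\left((h'(z)-h'(0))\,dz+\ol{(g'(z)-g'(0))}\,d\bar z\right).
\]
The first term has modulus at least $\lambda_f(0)|z_1-z_2|=|z_1-z_2|$. By Step 2, the second term has modulus at most $|z_1-z_2|\,\Lambda r/(1-r)$. Hence
\[
|f(z_1)-f(z_2)|\ge|z_1-z_2|\left(1-\frac{\Lambda r}{1-r}\right),
\]
and the right-hand side is positive whenever $r<1/(1+\Lambda)=\rho_5$. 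This gives univalence in $\D_{\rho_5}$.

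\emph{Step 4: the schlicht disk.} For $|z|=\rho_5$, integrate along the radius from $0$ to $z$ in the same way to get
\[
|f(z)|\ge \rho_5-\Lambda\int_0^{\rho_5}\frac{t}{1-t}\,dt=\rho_5+\Lambda\left(\rho_5+\ln(1-\rho_5)\right).
\]
Since $1-\rho_5=\Lambda\rho_5$, this equals $\sigma_5$. Because $f$ is univalent on $\D_{\rho_5}$ with $f(0)=0$, the image $f(\D_{\rho_5})$ contains the disk of radius $\sigma_5$.

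\emph{Main obstacle.} The only delicate point is Step 1. Everything else is routine once the pointwise bound $\Lambda_f\le K\lambda_f+\sqrt{K'}$ is available, together with the alignment trick in Step 2 that bounds $|a_n|+|b_n|$ rather than each coefficient separately.
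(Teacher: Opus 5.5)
Your proof is correct and complete, and it lands exactly on the stated constants. Steps 1--2 give $n(|a_n|+|b_n|)\le\Lambda:=K\lambda+\sqrt{K'}$. Step 3 gives the factor $1-\Lambda r/(1-r)$, which is positive precisely for $r<1/(1+\Lambda)=\rho_5$. Step 4, with $1-\rho_5=\Lambda\rho_5$, gives $\sigma_5$.

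The paper does not prove Theorem E; it is quoted from Allu and Kumar. The useful comparison is therefore with the paper's analogous arguments.

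\emph{Step 1 versus the proof of Theorem~\ref{tha1}.} Your Step 1 is the reduction (\ref{3z1}) used there. The paper, however, keeps the sharper constant $\Lambda'=\frac12\bigl(K\lambda+\sqrt{K^2\lambda^2+4K'}\bigr)\le K\lambda+\sqrt{K'}$. It then invokes Theorem A instead of redoing a Landau estimate. For $p=1$ this yields univalence on $\D_{1/\Lambda'}$, which is strictly larger than $\D_{\rho_5}$.

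\emph{Steps 3--4 versus the proof of Theorem~\ref{th1}.} Your Steps 3--4 follow the same scheme as the $I_1$, $I_2$ and boundary-circle estimates there. The difference is the source of the coefficient bounds: the paper uses the Parseval-type Theorem~\ref{la4}, whereas you use a pointwise bound on $\Lambda_f$.

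\emph{What each route buys.} Your route is self-contained and reproduces $\rho_5$ and $\sigma_5$ exactly, but it loses twice.
First, it weakens $\Lambda'$ to $K\lambda+\sqrt{K'}$.
Second, the bare Cauchy estimate in Step 2 ignores the normalization $|\varphi_\theta(0)|\ge\lambda_f(0)=1$.
Replacing Cauchy's estimate by the Schwarz--Pick bound $|\varphi_\theta(z)-\varphi_\theta(0)|\le(\Lambda^2-1)|z|/(\Lambda-|z|)$, valid for $|z|\le 1/\Lambda$, turns your lower factor into $\Lambda(1-\Lambda r)/(\Lambda-r)$. The univalence radius then becomes $1/\Lambda$. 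This is exactly why Theorem E is not sharp and admits the improvements the paper alludes to.
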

\begin{theoF}\cite[Theorem 2.3]{AK2024}\label{thg}
Let $f=h+\ol{g}$ be a harmonic $K$-quasiregular mapping defined in $\D$ such that $f(0)=0, J_f(0)=1$ and $\lambda_f(z)\leq \lambda$ for all $z\in\D$. Then $f$ is univalent in $\D_{\rho_6}$ with
\beas \rho_6=\rho_6(\lambda)=\frac{1}{1+\lambda K^{3/2}}\eeas
and $f(\D_{\rho_6})$ contains a schlicht disk with radius
\beas \sigma_6=\frac{\rho_6(\lambda)}{\sqrt{K}}+K\lambda\left(\rho_6(\lambda)+\ln{\left(\lambda K^{3/2}\rho_6(\lambda)\right)}\right).\eeas
\end{theoF}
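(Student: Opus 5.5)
The plan is a direct coefficient argument. We write $f=h+\ol{g}$ with $h(z)=\sum_{n\ge1}a_nz^n$ and $g(z)=\sum_{n\ge1}b_nz^n$; there is no constant term since $f(0)=0$ and $g(0)=0$. We first turn the hypotheses into two numerical bounds.

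\emph{Upper bound on $\Lambda_f$.} Since $f$ is $K$-quasiregular, $\Lambda_f\le K\lambda_f\le K\lambda$ on $\D$. Put $L:=K\lambda$.

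\emph{Lower bound at the origin.} Since $f$ is sense-preserving, $J_f(0)=\Lambda_f(0)\lambda_f(0)=1$. Combined with $\Lambda_f(0)\le K\lambda_f(0)$, this gives $1\le K\lambda_f(0)^2$, so $\lambda_f(0)\ge 1/\sqrt{K}$.

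\emph{Coefficient bound.} Next we bound $n(|a_n|+|b_n|)$ for $n\ge2$. Fix $\theta$ and consider the analytic function $\varphi_\theta=h'+e^{i\theta}g'$. It satisfies $|\varphi_\theta|\le |h'|+|g'|=\Lambda_f\le L$ on $\D$. By Cauchy's estimates its $(n-1)$-st Taylor coefficient obeys $|na_n+e^{i\theta}nb_n|\le L$. Choosing $\theta$ to align the two terms yields
\[
n(|a_n|+|b_n|)\le L=K\lambda,\qquad n\ge 2.
\]
If needed, this could be sharpened by a Schwarz–Pick type argument, but the crude bound already reproduces $\rho_6$ exactly.

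\emph{Univalence.} Take $z_1\ne z_2$ in $\D_r$ and split off the linear part:
\[
|f(z_1)-f(z_2)|\ge |a_1(z_1-z_2)+\ol{b_1(z_1-z_2)}|-\sum_{n\ge2}(|a_n|+|b_n|)|z_1^n-z_2^n|.
\]
The first term is at least $\lambda_f(0)|z_1-z_2|\ge |z_1-z_2|/\sqrt K$. For the sum we use $|z_1^n-z_2^n|\le n r^{n-1}|z_1-z_2|$ together with the coefficient bound. This gives
\[
|f(z_1)-f(z_2)|\ge |z_1-z_2|\Bigl(\tfrac{1}{\sqrt K}-K\lambda\tfrac{r}{1-r}\Bigr).
\]
The bracket is positive exactly when $r<1/(1+\lambda K^{3/2})=\rho_6$. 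Hence $f$ is univalent in $\D_{\rho_6}$.

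\emph{Schlicht disk.} For $|z|=\rho_6$ we estimate
\[
|f(z)|\ge \lambda_f(0)\rho_6-\sum_{n\ge2}\frac{K\lambda}{n}\rho_6^n
\ge \frac{\rho_6}{\sqrt K}+K\lambda\bigl(\rho_6+\ln(1-\rho_6)\bigr).
\]
By the definition of $\rho_6$ we have $1-\rho_6=\lambda K^{3/2}\rho_6$, so this lower bound is exactly $\sigma_6$. Since $f$ is univalent on $\D_{\rho_6}$ and $f(0)=0$, the image $f(\D_{\rho_6})$ contains the disk $|w|<\sigma_6$, which is therefore a schlicht disk.

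\emph{Main obstacle.} The step needing care is the lower bound $\lambda_f(0)\ge1/\sqrt K$. Only $J_f(0)=1$ is given, so it must be extracted from the quasiregularity inequality at the origin. It is also what produces the factor $K^{3/2}$ in $\rho_6$, via $L/\lambda_f(0)\le K\lambda\cdot\sqrt K$. The rest is routine: the coefficient bound via Cauchy estimates of $h'+e^{i\theta}g'$ and the summation of the resulting series.
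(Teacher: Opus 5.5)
Your argument is correct. Note that the paper gives no proof of Theorem F; it is quoted from \cite{AK2024}. The natural comparison is therefore with the paper's own analogues, Theorem \ref{la5}, Theorem \ref{th2} and Corollary \ref{c5}.

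The overall structure there is the same as yours. From $J_F(0)=1$ and $\Lambda_F^2\le KJ_F+K'$ the paper gets $\lambda_F(0)\ge 1/\sqrt{K+K'}$, which is your $1/\sqrt K$ when $K'=0$. It then uses a segment estimate for univalence and a boundary estimate for the schlicht disk.

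The genuine difference is the coefficient step. The paper bounds $|F_z|^2+|F_{\ol z}|^2$ pointwise via Lemma A and integrates with Parseval. It does this because in the polyharmonic setting $F_z$ is not analytic. That route needs the argument condition (\ref{1a2}) to control cross terms, and it yields the constant $\sqrt{(K^2+1)\lambda^2-1/K}$.

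You instead apply Cauchy's estimates to the analytic functions $h'+e^{i\theta}g'$, using $\Lambda_f\le K\lambda_f\le K\lambda$. This is only available for $p=1$, but it needs no condition on the coefficients and gives the constant $K\lambda$. Since $\lambda\ge\lambda_f(0)\ge 1/\sqrt K$, we have $K^2\lambda^2\le (K^2+1)\lambda^2-1/K$, so your route is sharper. It recovers exactly $\rho_6=1/(1+\lambda K^{3/2})$. The paper's method specialized to $p=1$ (Corollary \ref{c5}) would only give the smaller radius $1/\bigl(1+\sqrt{K(K^2+1)\lambda^2-1}\bigr)$.

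You state one step without justification: that univalence on $\D_{\rho_6}$ plus $|f|\ge\sigma_6$ on $|z|=\rho_6$ gives $\D_{\sigma_6}\subset f(\D_{\rho_6})$. This is the standard open-mapping and boundary argument, and the paper treats it equally briefly. You could also note that $\sigma_6>0$. This holds because $r\mapsto r/\sqrt K+K\lambda\bigl(r+\ln(1-r)\bigr)$ vanishes at $0$ and has derivative $1/\sqrt K-K\lambda\, r/(1-r)>0$ on $(0,\rho_6)$.
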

Some improved results of Theorem E and Theorem F are given in \cite{LX2024}.\\
The subsequent lemma is pivotal in substantiating our principal findings.
\begin{lemA}\label{la1}\cite{CP2020}
Let $f\in C^1$ be a sense-preserving mapping. Then $f$ is an elliptic mapping if, and only if, there exist constants $c\in[0,1)$ and $d\in[0,\infty)$ such that
\beas |f_{\ol{z}}(z)|\leq c|f_z(z)|+d \quad\text{for}\quad z\in \D.\eeas
In particular, if $f$ is a $(K,K')$-elliptic mapping, then
\beas c=\frac{K-1}{K+1},\;d=\frac{\sqrt{K'}}{1+K}.\eeas
\end{lemA}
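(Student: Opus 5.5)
The plan is to reduce everything to an elementary inequality between $|f_z|$ and $|f_{\ol z}|$ at a single point. Fix $z\in\D$ and write $a=|f_z(z)|$, $b=|f_{\ol z}(z)|$. Since $f$ is sense-preserving we have $a\ge b$, and $J_f(z)=a^2-b^2$. As computed earlier, $\|D_f\|=\Lambda_f=a+b$. Set $s=a+b$ and $t=a-b\ge 0$, so that $J_f=st$. In these variables the $(K,K')$-elliptic inequality reads
\[ s^2\le Kst+K'. \]
The regularity part of the definition is automatic: a $C^1$ mapping is $ACL^2$ on compact subsets. The condition $J_f\neq 0$ a.e. is understood as part of the sense-preserving hypothesis, namely $J_f>0$ a.e. So only the pointwise inequalities need to be compared.

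\emph{Elliptic $\Rightarrow$ bound.} View $s^2-Kts-K'\le 0$ as a quadratic inequality in $s\ge 0$. It gives
\[ s\le \frac{Kt+\sqrt{K^2t^2+4K'}}{2}\le Kt+\sqrt{K'}, \]
where the last step uses $\sqrt{x+y}\le\sqrt x+\sqrt y$. Substituting back $s=a+b$ and $t=a-b$ yields $a+b\le K(a-b)+\sqrt{K'}$, that is,
\[ (1+K)\,b\le (K-1)\,a+\sqrt{K'}. \]
This is exactly $|f_{\ol z}|\le c|f_z|+d$ with $c=\frac{K-1}{K+1}\in[0,1)$ and $d=\frac{\sqrt{K'}}{1+K}$, which are the stated constants.

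\emph{Bound $\Rightarrow$ elliptic.} Given $c\in[0,1)$ and $d\ge0$, define $K_0=\frac{1+c}{1-c}\ge1$, so that $c=\frac{K_0-1}{K_0+1}$. Multiplying $b\le ca+d$ by $1+K_0$ and reversing the computation above gives $s\le K_0t+D$ with $D=d(1+K_0)$. This linear bound is weaker than the quadratic one, and recovering a quadratic inequality is the only real step. Multiply by $s\ge0$ to get $s^2\le K_0st+Ds$. Then absorb the cross term with $Ds\le \tfrac12 s^2+\tfrac12 D^2$, which gives
\[ s^2\le 2K_0\,st+D^2. \]
Hence $\|D_f\|^2\le KJ_f+K'$ with $K=2K_0\ge1$ and $K'=D^2$, so $f$ is elliptic. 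The constants here are not sharp, but the equivalence only asserts existence, and the explicit formulas in the statement concern only the forward direction.
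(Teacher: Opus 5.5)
The paper gives no proof of Lemma A. It is imported from \cite{CP2020} and used as a black box in the proof of Theorem~\ref{la3}, so there is no in-paper argument to compare with. Your argument is correct and self-contained.

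Your forward direction solves the same quadratic the authors solve in the proof of Theorem~\ref{tha1} to obtain $\Lambda_p'$; there the bound $\Lambda_p$ plays the role of your $t=\lambda_f(z)$. The extra lossy step $\sqrt{K^2t^2+4K'}\le Kt+2\sqrt{K'}$ is exactly what produces $c=\frac{K-1}{K+1}$ and $d=\frac{\sqrt{K'}}{1+K}$. The converse via $Ds\le\frac12 s^2+\frac12 D^2$ is fine.

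Two small points deserve a sentence each.

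First, the definition only gives $\|D_f\|^2\le KJ_f+K'$ almost everywhere. Before fixing an arbitrary $z\in\D$, note that for $f\in C^1$ both sides are continuous. The set where the inequality fails is then open and of measure zero, hence empty.

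Second, your reading of ``sense-preserving'' as $J_f>0$ (a.e.) is not just a convenience; the converse needs it to be true at all. A constant map satisfies $|f_{\ol z}|\le c|f_z|+d$ for every $c,d$, yet $J_f\equiv 0$. So under the weaker reading $J_f\ge 0$ it would satisfy the hypothesis without being elliptic.
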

It is natural to raise the following problems.
\begin{prob}\label{p1}
Can we establish the Landau-type theorems for $(K,K')$-elliptic polyharmonic, $K$-quasiregular polyharmonic and poly $(K,K')$-elliptic harmonic mappings? Furthermore, can we establish a few sharp results?
\end{prob}
\indent To present an affirmative answer to Problem \ref{p1}, this paper is organized into the following sections. In section 2, we present statements of Landau-type theorems for poly $(K,K')$-elliptic harmonic mappings, which are sharp in some given cases. Furthermore,  statements of theorems for coefficient estimates and Landau-type theorems for $(K,K')$-elliptic polyharmonic and $K$-quasiregular polyharmonic mappings are also provided. In section 3, we provide the proofs of the main results.
\section{Main Results}
We first establish two Landau-type theorems for poly $(K,K')$-elliptic harmonic mappings that correspond to Theorem A and Theorem B as follows:
\begin{theo}\label{tha1}
Let $F(z)=\sum_{k=1}^p |z|^{2(k-1)}G_{p-k+1}(z)$ be a poly $(K,K')$-elliptic harmonic mapping in $\D$ such that $F(0)=0$, and satisfying the following conditions:
\begin{itemize}
\item[(i)] $G_{p-k+1}(z)$ is elliptic harmonic in $\D$ and $\lambda_{G_{p-k+1}}(0)-1=G_{p-k+1}(0)=0$ for $k\in\{1,\cdots, p\}$;
\item[(ii)] $|G_{p-k+1}(z)|\leq M_{p-k+1}$ for $k\in\{2,\cdots, p\}$ and $\lambda_{G_p}(z)\leq \Lambda_p$ for all $z\in\D$.
\end{itemize}
Then $F(z)$ is univalent in $\D_{r_1}$, where $r_1$ is the unique root in $(0,1)$ of the equation
\bea\label{2z1} \frac{\Lambda_p'(1-\Lambda_p' r)}{\Lambda_p'-r}-\phi(r)=0,\eea
where $\phi(r)$ is given by (\ref{1z1}) and $\Lambda_p'=\left(K\Lambda_p+\sqrt{K^2\Lambda_p^2+4K'}\right)/2$.\\[2mm]
Furthermore, $F(\D_{r_1})$ contains a schlicht disk with radius
{\small \beas\label{2z2} R_1=\Lambda_p'^2r_1+(\Lambda_p'^3-\Lambda_p')\log{(1-\frac{r_1}{\Lambda_p'})}-\sum_{k=2}^p{r_1^{2k-1}\left(K_1(M_{p-k+1})+\sqrt{2M_{p-k+1}^2-2}\frac{r_1}{\sqrt{1-r_1^2}}\right)}.\eeas}
When $K=1,K'=0$ and $M_{p-k+1}=1$ for $k=2,\cdots,p$, the result is sharp.
\end{theo}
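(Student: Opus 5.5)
The plan is to reduce Theorem \ref{tha1} to the proof of Theorem A. The only new ingredient is a bound on $\Lambda_{G_p}$ derived from the hypothesis $\lambda_{G_p}\le\Lambda_p$ and the $(K,K')$-ellipticity. With that bound in hand, the rest of Fu--Luo's argument runs verbatim with $\Lambda_p$ replaced by $\Lambda_p'$.

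\textbf{Step 1 (the key bound).} Write $G_p=h+\ol{g}$. Since $G_p$ is sense-preserving, $\lambda_{G_p}=|h'|-|g'|$ and $J_{G_p}=\Lambda_{G_p}\lambda_{G_p}$. The ellipticity condition $\|D_{G_p}\|^2\le KJ_{G_p}+K'$ reads
\beas \Lambda_{G_p}^2\le K\lambda_{G_p}\Lambda_{G_p}+K'\le K\Lambda_p\Lambda_{G_p}+K'.\eeas
Solving this quadratic inequality in $\Lambda_{G_p}$ gives
\beas \Lambda_{G_p}(z)\le \frac{K\Lambda_p+\sqrt{K^2\Lambda_p^2+4K'}}{2}=\Lambda_p' \quad\text{for all } z\in\D.\eeas
Moreover $\Lambda_{G_p}(0)\ge\lambda_{G_p}(0)=1$, so $\Lambda_p'\ge1$. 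Thus $G_p$ satisfies hypothesis (ii) of Theorem A with $\Lambda_p'$ in place of $\Lambda_p$. The remaining $G_{p-k+1}$, $k\ge2$, satisfy exactly the hypotheses of Theorem A: they are harmonic, normalized with $\lambda(0)=1$, and bounded by $M_{p-k+1}$.

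\textbf{Step 2 (univalence).} Following Fu--Luo, take $z_1\ne z_2$ in $\D_r$ and estimate $|F(z_2)-F(z_1)|$ from below via the line integral along $[z_1,z_2]$. The leading term $G_p$ contributes at least $|z_2-z_1|\,\Lambda_p'(1-\Lambda_p' r)/(\Lambda_p'-r)$. This comes from the standard sharp estimate for harmonic $G$ with $\lambda_G(0)=1$ and $\Lambda_G\le\Lambda$: the coefficient bounds $|a_n|+|b_n|\le(\Lambda^2-1)/(n\Lambda^{n-1})$ give $|h'(z)-h'(0)|+|g'(z)-g'(0)|\le$ the Schwarz--Pick-type expression $\Lambda\,(\Lambda-\ldots)$, reproducing $\Lambda(1-\Lambda r)/(\Lambda-r)$. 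The terms $|z|^{2(k-1)}G_{p-k+1}$ are bounded above by $|z_2-z_1|\,\phi(r)$, using the coefficient estimates for bounded harmonic maps with $\lambda(0)=1$ (the origin of $K_1(M)$ and $\sqrt{2M^2-2}$) exactly as in Theorem A. Univalence then holds for $r<r_1$. To see that $r_1$ is the unique root, note that the left side of (\ref{2z1}) is strictly decreasing in $r$, positive at $0$ (its value there is $1$), and tends to $-\infty$ at $r=1$ or becomes negative at $r=1/\Lambda_p'$.

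\textbf{Step 3 (schlicht disk and sharpness).} For $|z|=r_1$, integrate the same lower bound radially from $0$:
\beas |F(z)|\ge\int_0^{r_1}\frac{\Lambda_p'(1-\Lambda_p't)}{\Lambda_p'-t}\,dt-\sum_{k=2}^p(\cdots)=R_1,\eeas
where the integral evaluates to $\Lambda_p'^2r_1+(\Lambda_p'^3-\Lambda_p')\log(1-r_1/\Lambda_p')$ and the subtracted terms are the bounds $r_1^{2k-1}\bigl(K_1+\sqrt{2M^2-2}\,r_1/\sqrt{1-r_1^2}\bigr)$ on $|z|^{2(k-1)}|G_{p-k+1}(z)|$. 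For sharpness, when $K=1$ and $K'=0$ we have $\Lambda_p'=\Lambda_p$, so the statement coincides with Theorem A. That theorem's extremal function $F_1$ has each component sense-preserving harmonic and $1$-quasiregular, with $\lambda_{G_p}=\Lambda_{G_p}$, so $F_1$ is admissible and sharpness is inherited.

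The main obstacle is Step 1, specifically making sure that $\lambda$ (rather than $\Lambda$) in hypothesis (ii) converts cleanly into a $\Lambda$-bound. This requires sense-preservation so that $J=\Lambda\lambda$, and then the quadratic solution. After that, the proof is a direct transcription of Theorem A's argument.
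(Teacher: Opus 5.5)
Your proposal is correct and follows the paper's proof. The paper likewise obtains $\Lambda_{G_p}\le\Lambda_p'$ from $\Lambda_{G_p}^2\le KJ_{G_p}+K'\le K\Lambda_p\Lambda_{G_p}+K'$ and then applies Theorem A directly, with $F_1$ giving sharpness, so your re-sketch of the Fu--Luo argument in Steps 2--3 is not needed.

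Since you do sketch it, one correction. The coefficient bound $|a_n|+|b_n|\le(\Lambda^2-1)/(n\Lambda^{n-1})$ is false for $n\ge3$: take $h'(z)=\Lambda\,\frac{1/\Lambda-z^2}{1-z^2/\Lambda}$, which gives $|a_3|=(\Lambda^2-1)/(3\Lambda)$. The factor $\Lambda(1-\Lambda r)/(\Lambda-r)$ comes instead from Schwarz--Pick applied to the analytic functions $h'+e^{i\theta}g'$. These are bounded by $\Lambda$ and satisfy $|h'(0)+e^{i\theta}g'(0)|\ge1$.
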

\begin{theo}\label{tha2}
Let $F(z)=\sum_{k=1}^p |z|^{2(k-1)}G_{p-k+1}(z)$ be a poly $(K,K')$-elliptic harmonic mapping in $\D$ such that $F(0)=\lambda_F(0)-1=0$, and satisfying the following conditions:
\begin{itemize}
\item[(i)] $G_{p-k+1}(z)$ is elliptic harmonic in $\D$ and $G_{p-k+1}(0)=0$ for $k\in\{1,\cdots, p\}$; 
\item[(ii)] $\lambda_{G_{p-k+1}}(z)\leq \Lambda_{p-k+1}$ for $k\in\{2,\cdots, p\}$ and $|G_{p}(z)|\leq M_{p}$ for all $z\in\D$.
\end{itemize}
Then $F(z)$ is univalent in $\D_{r_2}$, where $r_2$ is the unique root in $(0,1)$ of the following equation
\bea\label{2z3} 1-\sqrt{2M_p^2-2}\cdot \frac{r\sqrt{r^4-3r^2+4}}{(1-r^2)^{3/2}}-\sum_{k=2}^p{(2k-1)}\Lambda_{p-k+1}'r^{2(k-1)}=0\eea
with $\Lambda_{p-k+1}'=\left(K\Lambda_{p-k+1}+\sqrt{K^2\Lambda_{p-k+1}^2+4K'}\right)/2$.\\[2mm]
Furthermore, $F(\D_{r_2})$ contains a schlicht disk with radius
\bea\label{2z4} R_2=r_2-\sqrt{2M_p^2-2}\cdot \frac{r_2^2}{\sqrt{1-r_2^2}}-\sum_{k=2}^p{r_2^{2k-1}}\Lambda_{p-k+1}'.
\eea
When $K=1,K'=0$ and $M_{p}=1$, the result is sharp.
\end{theo}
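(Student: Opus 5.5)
Theorem \ref{tha2} will be reduced to Theorem B by turning the hypothesis on $\lambda_{G_{p-k+1}}$ into a bound on $\Lambda_{G_{p-k+1}}$. Each $G_{p-k+1}$ is $(K,K')$-elliptic harmonic, hence sense-preserving with $J=\Lambda\lambda$. Put $\Lambda=\Lambda_{G_{p-k+1}}(z)$ and $\lambda=\lambda_{G_{p-k+1}}(z)$. The ellipticity inequality $\|D_f\|^2\le KJ_f+K'$ then reads
\[\Lambda^2\le K\Lambda\lambda+K'.\]
Regarded as a quadratic inequality in $\Lambda\ge0$, it gives
\[\Lambda\le \frac{K\lambda+\sqrt{K^2\lambda^2+4K'}}{2}.\]
The right-hand side is increasing in $\lambda$, so the hypothesis $\lambda_{G_{p-k+1}}(z)\le \Lambda_{p-k+1}$ yields $\Lambda_{G_{p-k+1}}(z)\le \Lambda_{p-k+1}'$ on all of $\D$ for $k\in\{2,\dots,p\}$. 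This is exactly hypothesis (ii) of Theorem B, with $\Lambda_{p-k+1}$ replaced by $\Lambda'_{p-k+1}$. The same argument, with $\Lambda_p'$, is what drives Theorem \ref{tha1} from Theorem A.

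The remaining hypotheses carry over unchanged. We have $F(0)=\lambda_F(0)-1=0$, each $G_{p-k+1}(0)=0$, each $G_{p-k+1}$ is harmonic, and $|G_p|\le M_p$. So the proof of Theorem B can be rerun verbatim with $\Lambda'_{p-k+1}$. The key steps of that proof, which I will write out briefly for completeness, are as follows.

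First, for $|z_1|,|z_2|<r$ with $z_1\ne z_2$, I write $F(z_2)-F(z_1)$ as an integral along the segment $[z_1,z_2]$. I split it into the term coming from $G_p$ and the terms $|z|^{2(k-1)}G_{p-k+1}$ with $k\ge2$.

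Second, for $G_p=h+\ol g$ with $|G_p|\le M_p$, I use the coefficient bound
\[|a_n|+|b_n|\le\sqrt{2M_p^2-2},\quad n\ge2,\]
which comes from Parseval and the normalization $\lambda_{G_p}(0)=1$ (here $\lambda_F(0)=\lambda_{G_p}(0)$). This gives
\[\bigl|(G_p)_z(z)-(G_p)_z(0)\bigr|+\bigl|(G_p)_{\ol z}(z)-(G_p)_{\ol z}(0)\bigr|\le \sqrt{2M_p^2-2}\,\frac{r\sqrt{r^4-3r^2+4}}{(1-r^2)^{3/2}},\]
the same estimate as in Theorem B.

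Third, for $k\ge2$, the derivative of $|z|^{2(k-1)}G_{p-k+1}$ is at most $(2k-1)r^{2(k-1)}\Lambda'_{p-k+1}$, using $|G_{p-k+1}(z)|\le \Lambda'_{p-k+1}|z|$. Combining the three steps,
\[|F(z_2)-F(z_1)|\ge |z_2-z_1|\Bigl(1-\sqrt{2M_p^2-2}\,\frac{r\sqrt{r^4-3r^2+4}}{(1-r^2)^{3/2}}-\sum_{k=2}^p(2k-1)\Lambda'_{p-k+1}r^{2(k-1)}\Bigr).\]
The bracket is positive for $r<r_2$, which gives univalence in $\D_{r_2}$. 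The left side of (\ref{2z3}) is strictly decreasing from $1$ to $-\infty$ on $(0,1)$, so $r_2$ exists and is unique. For the schlicht disk, a lower bound for $|F(z)|$ on $|z|=r_2$, obtained by integrating radially the same estimates, gives $R_2$.

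For sharpness, take $K=1$ and $K'=0$, so that $\Lambda'_{p-k+1}=\Lambda_{p-k+1}$, and $M_p=1$. Then
\[F_2(z)=z-\sum_{k=1}^{p-1}\Lambda_{p-k}|z|^{2k}z\]
is admissible: each $G$ is $cz$, which is $1$-quasiregular. It is extremal, because $F_2$ on the positive real axis attains $R_2$ and its radial derivative vanishes at $r_2$. The only genuinely new point is the monotone inversion of the quadratic inequality from $\lambda$ to $\Lambda$. Everything else is inherited from Theorem B, so I expect no real obstacle beyond checking that Theorem B used nothing of the $G_{p-k+1}$, $k\ge2$, beyond a $\Lambda$-bound and $G(0)=0$.
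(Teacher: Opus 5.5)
Your proposal is correct and matches the paper's proof: the inequality $\Lambda_G^2\le K\Lambda_G\lambda_G+K'$ turns $\lambda_{G_{p-k+1}}\le\Lambda_{p-k+1}$ into $\Lambda_{G_{p-k+1}}\le\Lambda'_{p-k+1}$, Theorem B then applies verbatim, and $F_2$ gives sharpness. Your optional recap of Theorem B is loose, though the reduction does not depend on it: the factor $r\sqrt{r^4-3r^2+4}/(1-r^2)^{3/2}$, and likewise $r_2^2/\sqrt{1-r_2^2}$ in $R_2$, comes from Cauchy--Schwarz with the $\ell^2$ bound $\sum_{n\ge2}(|a_n|+|b_n|)^2\le 2M_p^2-2$ rather than from the termwise bound, and for $M_p=1$ the left side of the equation for $r_2$ tends to $1-\sum_{k=2}^p(2k-1)\Lambda'_{p-k+1}$ rather than $-\infty$, so a root in $(0,1)$ need not exist (a defect the statement inherits from Theorem B).
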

Next, we establish coefficient estimates for $(K, K')$-elliptic and $K$-quasiregular polyharmonic mappings with bounded minimum distortion.
\begin{theo}\label{la3}
Let $F$ be a $(K, K')$-elliptic polyharmonic mapping of the form (\ref{1a1}) satisfying (\ref{1a2})
with $\lambda_F(z)\leq \lambda$ for $z\in\D$. Then we have
\beas \sum_{k=1}^p \sum_{n=1}^\infty \left((n+k-1)^2+(k-1)^2\right)\left(|a_{n,k}|^2+|b_{n,k}|^2\right)\leq \frac{(K^2+1)\lambda^2+2K\sqrt{K'}\lambda+K'}{2}.\eeas
In particular, we have
\bea\label{2a1}\left\{\begin{array}{lll}|a_{n,k}|+|b_{n,k}|\leq \frac{\sqrt{(K^2+1)\lambda^2+2K\sqrt{K'}\lambda+K'}}{n}\;\;\text{for all}\;\;n\geq2,k=1,\\[2mm]
|a_{n,k}|+|b_{n,k}|\leq \frac{\sqrt{(K^2+1)\lambda^2+2K\sqrt{K'}\lambda+K'}}{\sqrt{10}} \;\;\text{for all}\;\; n\geq 2, 2\leq k\leq p,\\[2mm]
|a_{n,k}|+|b_{n,k}|\leq \frac{\sqrt{(K^2+1)\lambda^2+2K\sqrt{K'}\lambda+K'}}{\sqrt{5}}\;\;\text{for all}\;\;n=1,2\leq k\leq p.\end{array}\right.\eea
\end{theo}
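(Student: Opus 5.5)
The plan is to turn the pointwise hypothesis $\lambda_F\le\lambda$ together with Lemma A into a pointwise bound on $|F_z|^2+|F_{\ol z}|^2$. Integrating that bound over circles $|z|=r$ and applying Parseval-type orthogonality, with (\ref{1a2}) controlling the cross terms, should give the coefficient inequality. The individual estimates (\ref{2a1}) then follow by keeping one term.

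\emph{Step 1 (pointwise bound).} Since $F$ is sense-preserving, $\lambda_F=|F_z|-|F_{\ol z}|\le\lambda$. Lemma A gives $|F_{\ol z}|\le c|F_z|+d$ with $c=\frac{K-1}{K+1}$ and $d=\frac{\sqrt{K'}}{K+1}$. Combining the two, $(1-c)|F_z|-d\le\lambda$, so
$$|F_z|\le \frac{\lambda+d}{1-c}=\frac{(K+1)\lambda+\sqrt{K'}}{2}.$$
Feeding this back into Lemma A gives $|F_{\ol z}|\le\frac{(K-1)\lambda+\sqrt{K'}}{2}$. Squaring and adding,
$$|F_z|^2+|F_{\ol z}|^2\le \frac{(K^2+1)\lambda^2+2K\sqrt{K'}\lambda+K'}{2}=:S,$$
which is exactly the right-hand side of the theorem.

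\emph{Step 2 (Parseval).} From (\ref{1a1}), each $a_{n,k}$-term of $F_z$ is
$$(n+k-1)a_{n,k}\,|z|^{2(k-1)}z^{n-1},$$
which has angular frequency $n-1$. The corresponding $b$-term is $(k-1)\ol{b_{n,k}}\,z^{k-2}\ol z^{\,n+k-1}$, with frequency $-(n+1)$. The analogous statements hold for $F_{\ol z}$, with the roles of $a$ and $b$ exchanged. Distinct frequencies are orthogonal on $|z|=r$, so $\frac1{2\pi}\int_0^{2\pi}(|F_z|^2+|F_{\ol z}|^2)\,d\theta$ splits into sums over $n$ of $\bigl|\sum_k(\cdots)\bigr|^2$. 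Only terms with the same $n$ but different $k$ interact.

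\emph{Step 3 (cross terms; the main obstacle).} Those interacting terms involve products such as $\mathrm{Re}(a_{n,k_1}\ol{a_{n,k_2}})$ and the analogous products of $b$'s, with nonnegative real weights. Condition (\ref{1a2}) is designed to make these real parts nonnegative. For the $a$'s this is immediate. For the $b$'s with different $k$ I will have to check carefully how the mixed condition $|\arg(b_{n,k_3}/a_{n,k_4})|\le\pi/2$ is used, and this is where I expect the real work. Once the cross terms are dropped, we get
$$\sum_{k,n}\bigl((n+k-1)^2+(k-1)^2\bigr)\bigl(|a_{n,k}|^2+|b_{n,k}|^2\bigr)r^{2(n+2k-3)}\le S.$$
Letting $r\to1^-$ by monotone convergence yields the main inequality.

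\emph{Step 4 (particular estimates).} Use $(|a|+|b|)^2\le2(|a|^2+|b|^2)$ and retain a single term of the sum.
\begin{itemize}
\item For $k=1$ the weight is $n^2$, so $|a_{n,1}|+|b_{n,1}|\le\sqrt{2S}/n$.
\item For $k\ge2,\ n\ge2$ the weight is at least $3^2+1^2=10$, giving the bound $\sqrt{2S}/\sqrt{10}$.
\item For $n=1,\ k\ge2$ the weight is $k^2+(k-1)^2\ge5$, giving the bound $\sqrt{2S}/\sqrt5$.
\end{itemize}
Since $\sqrt{2S}=\sqrt{(K^2+1)\lambda^2+2K\sqrt{K'}\lambda+K'}$, these are exactly the bounds in (\ref{2a1}).
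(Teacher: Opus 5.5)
Apart from the step you left open, your plan is the paper's proof step for step. Both use Lemma A to get $|F_z|\le\frac{(K+1)\lambda+\sqrt{K'}}{2}$ and hence $|F_z|^2+|F_{\overline z}|^2\le S$. Both then apply Parseval on $|z|=r$ with the cross terms dropped, let $r\to1^-$, and extract single terms with weights $n^2$, $\ge10$ and $\ge5$. Your frequency bookkeeping, with the uniform factor $r^{2(n+2k-3)}$, is tidier than the paper's.

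The point you flagged in Step 3 is a genuine gap, and more work will not close it. Under the mixed condition $|\arg(b_{n,k_3}/a_{n,k_4})|\le\pi/2$ printed in (\ref{1a2}), $\mathrm{Re}(b_{n,k_1}\overline{b_{n,k_2}})\ge0$ does not follow, and the inequality you want at the end of Step 3 is actually false. Take $p=2$ and $F(z)=z+(1-|z|^2)\overline{\epsilon z^2}$ with $0<|\epsilon|<1/2$. Then $a_{1,1}=1$, $b_{2,1}=\epsilon$, $b_{2,2}=-\epsilon$, and all other coefficients vanish. Since there is no nonzero $a_{2,k}$, (\ref{1a2}) holds as written. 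From $F_z=1-\overline{\epsilon}\,\overline{z}^{\,3}$ and $F_{\overline z}=\overline{\epsilon}\,\overline{z}\,(2-3|z|^2)$ you get $|F_{\overline z}|\le\frac{|\epsilon|}{1-|\epsilon|}|F_z|$, so $F$ is sense-preserving and $K$-quasiregular with $K=1/(1-2|\epsilon|)$. Yet $\frac1{2\pi}\int_0^{2\pi}(|F_z|^2+|F_{\overline z}|^2)\,d\theta=1+|\epsilon|^2(4r^2-12r^4+10r^6)$. Your weighted sum is $1+|\epsilon|^2(4r^2+10r^6)$, which is larger by exactly the cross term $12|\epsilon|^2r^4$ coming from $\mathrm{Re}(b_{2,1}\overline{b_{2,2}})=-|\epsilon|^2<0$.

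The paper does not handle this either. It simply asserts $\mathrm{Re}(b_{n,k_1}\overline{b_{n,k_2}})\ge0$ ``from (\ref{1a2})'', so it tacitly uses $|\arg(b_{n,k_3}/b_{n,k_4})|\le\pi/2$. The pairing ``$b_{n,k_3},b_{n,k_4}$'' in the hypothesis suggests this is what (\ref{1a2}) was meant to say. With that hypothesis your Step 3 is a one-line remark, and your argument is complete and coincides with the paper's.
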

\begin{cor}\label{c1}
Let $F$ be a $K$-quasiregular polyharmonic mapping satisfying the hypothesis of \textrm{Theorem \ref{la3}}. Then we have
\beas\left\{\begin{array}{lll}|a_{n,k}|+|b_{n,k}| \leq \frac{\lambda\sqrt{K^2+1}}{n}\;\;\text{for all}\;\;n\geq2,k=1,\\[2mm]
|a_{n,k}|+|b_{n,k}|\leq \frac{\lambda\sqrt{K^2+1}}{\sqrt{10}} \;\;\text{for all}\;\; n\geq 2, 2\leq k\leq p,\\[2mm]
|a_{n,k}|+|b_{n,k}|\leq \frac{\lambda\sqrt{K^2+1}}{\sqrt{5}}\;\;\text{for all}\;\;n=1,2\leq k\leq p.\end{array}\right.\eeas
\end{cor}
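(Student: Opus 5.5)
The corollary follows by specializing Theorem \ref{la3} to $K'=0$. A $K$-quasiregular polyharmonic mapping is exactly a $(K,K')$-elliptic polyharmonic mapping with $K'=0$: the defining inequality $\|D_F\|^2\leq KJ_F+K'$ becomes $\|D_F\|^2\leq KJ_F$. All the other hypotheses of Theorem \ref{la3} are assumed in the corollary. These are the form (\ref{1a1}), the argument conditions (\ref{1a2}), and the bound $\lambda_F(z)\leq\lambda$ on $\D$. So Theorem \ref{la3} applies directly with $K'=0$.

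Setting $K'=0$ in the constant $\sqrt{(K^2+1)\lambda^2+2K\sqrt{K'}\lambda+K'}$ leaves $\sqrt{(K^2+1)\lambda^2}=\lambda\sqrt{K^2+1}$, using $\lambda\geq 0$. This holds because $\lambda$ bounds the nonnegative quantity $\lambda_F$. Substituting this value into each of the three lines of (\ref{2a1}) gives the three inequalities of the corollary:
\begin{itemize}
\item $|a_{n,k}|+|b_{n,k}|\leq \lambda\sqrt{K^2+1}/n$ for $n\geq 2$, $k=1$;
\item $|a_{n,k}|+|b_{n,k}|\leq \lambda\sqrt{K^2+1}/\sqrt{10}$ for $n\geq 2$, $2\leq k\leq p$;
\item $|a_{n,k}|+|b_{n,k}|\leq \lambda\sqrt{K^2+1}/\sqrt{5}$ for $n=1$, $2\leq k\leq p$.
\end{itemize}

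The only point that needs care is confirming that the $K$-quasiregular class sits inside the $(K,K')$-elliptic class with $K'=0$. This is noted right after the definition of $(K,K')$-elliptic mappings. The remaining step is the arithmetic simplification above.
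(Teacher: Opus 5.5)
Your proposal is correct and matches the paper's proof: the paper also observes that a $K$-quasiregular polyharmonic mapping is a $(K,0)$-elliptic polyharmonic mapping and substitutes $K'=0$ into Theorem \ref{la3}. Your remark that $\lambda\geq\lambda_F\geq 0$, which justifies $\sqrt{(K^2+1)\lambda^2}=\lambda\sqrt{K^2+1}$, is a detail the paper leaves implicit.
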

\begin{theo}\label{la4}
Let $F$ be a $(K, K')$-elliptic polyharmonic mapping satisfying the hypothesis of \textrm{Theorem \ref{la3}} with $F(0)=\lambda_F(0)-1=0$. Then we have
\beas\left\{\begin{array}{lll}|a_{n,k}|+|b_{n,k}|\leq \frac{\sqrt{(K^2+1)\lambda^2+2K\sqrt{K'}\lambda+K'-1}}{n}\;\;\text{for all}\;\;n\geq2,k=1,\\[2mm]
|a_{n,k}|+|b_{n,k}|\leq \frac{\sqrt{(K^2+1)\lambda^2+2K\sqrt{K'}\lambda+K'-1}}{\sqrt{10}} \;\;\text{for all}\;\; n\geq 2, 2\leq k\leq p,\\[2mm]
|a_{n,k}|+|b_{n,k}|\leq \frac{\sqrt{(K^2+1)\lambda^2+2K\sqrt{K'}\lambda+K'-1}}{\sqrt{5}}\;\;\text{for all}\;\;n=1,2\leq k\leq p.\end{array}\right.\eeas
\end{theo}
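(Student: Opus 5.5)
The plan is to derive Theorem \ref{la4} directly from the weighted coefficient inequality of Theorem \ref{la3}, using the normalization $\lambda_F(0)=1$ to discard a definite amount from the right-hand side. Put
\beas B=\frac{(K^2+1)\lambda^2+2K\sqrt{K'}\lambda+K'}{2}.\eeas
Since $F$ satisfies the hypotheses of Theorem \ref{la3}, we have
\beas \sum_{k=1}^p \sum_{n=1}^\infty \left((n+k-1)^2+(k-1)^2\right)\left(|a_{n,k}|^2+|b_{n,k}|^2\right)\leq B.\eeas

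The first step isolates the term $n=1$, $k=1$, whose weight is $(1+0)^2+0^2=1$. From (\ref{1a1}), the only contributions to $F_z(0)$ and $F_{\ol{z}}(0)$ come from $k=1$, $n=1$: every term with $k\geq 2$ carries the factor $|z|^{2(k-1)}$ and has vanishing first derivatives at $0$. Hence $F_z(0)=a_{1,1}$ and $F_{\ol{z}}(0)=\ol{b_{1,1}}$.

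The normalization $\lambda_F(0)=1$ then reads $\big||a_{1,1}|-|b_{1,1}|\big|=1$. It follows that
\beas |a_{1,1}|^2+|b_{1,1}|^2=1+2|a_{1,1}||b_{1,1}|\geq 1.\eeas
Removing this term from the sum gives
\beas \sum_{(n,k)\neq(1,1)} \left((n+k-1)^2+(k-1)^2\right)\left(|a_{n,k}|^2+|b_{n,k}|^2\right)\leq B-1.\eeas

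The second step bounds each remaining coefficient pair by its single term in this sum, together with $(|a|+|b|)^2\leq 2(|a|^2+|b|^2)$. There are three cases.
\begin{itemize}
\item[(a)] For $k=1$, $n\geq 2$ the weight is $n^2$, so $(|a_{n,1}|+|b_{n,1}|)^2\leq 2(B-1)/n^2$.
\item[(b)] For $k\geq 2$, $n\geq 2$ the weight is $(n+k-1)^2+(k-1)^2\geq 9+1=10$.
\item[(c)] For $k\geq2$, $n=1$ the weight is $k^2+(k-1)^2\geq 5$.
\end{itemize}
In each case we get $(|a_{n,k}|+|b_{n,k}|)^2\leq 2(B-1)/w$ with $w\in\{n^2,10,5\}$ respectively.

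Finally, $2(B-1)=2B-2\leq 2B-1=(K^2+1)\lambda^2+2K\sqrt{K'}\lambda+K'-1$, which yields the three stated inequalities. The argument in fact gives the slightly stronger constant $2B-2$.

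There is no serious obstacle. The only point needing care is confirming that no term with $k\geq2$ contributes to $F_z(0)$ or $F_{\ol{z}}(0)$. This holds because $\pa_z\left(|z|^{2(k-1)}z^n\right)$ and $\pa_{\ol{z}}\left(|z|^{2(k-1)}\ol{z}^n\right)$ vanish at $0$ whenever $k\geq2$ or $n\geq2$. The argument conditions (\ref{1a2}) are not used directly here; they enter only through Theorem \ref{la3}.
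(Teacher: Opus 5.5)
Your proposal is correct and is essentially the paper's argument: both start from the weighted inequality (\ref{2a5}) of Theorem \ref{la3}, use $\lambda_F(0)=\big||a_{1,1}|-|b_{1,1}|\big|=1$ to discard the $(n,k)=(1,1)$ term, and then read off the three cases from the weights $n^2$, $10$ and $5$. The difference is only in the order of steps. You subtract $|a_{1,1}|^2+|b_{1,1}|^2\geq 1$ before applying $(x+y)^2\leq 2(x^2+y^2)$, whereas the paper applies that inequality first and then subtracts $(|a_{1,1}|+|b_{1,1}|)^2\geq 1$. This gives you the slightly sharper quantity $(K^2+1)\lambda^2+2K\sqrt{K'}\lambda+K'-2$ under the square root. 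Because you remove only the $(1,1)$ term, you also cover the case $n=1$, $2\leq k\leq p$ more cleanly than the paper's display (\ref{2a8}), which is summed only over $n\geq 2$.
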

\begin{cor}\label{c2}
Let $F$ be a $K$-quasiregular polyharmonic mapping satisfying the hypothesis of \textrm{Theorem \ref{la4}}. Then we have
\beas\left\{\begin{array}{lll}|a_{n,k}|+|b_{n,k}|\leq \frac{\sqrt{(K^2+1)\lambda^2-1}}{n}\;\;\text{for all}\;\;n\geq2,k=1,\\[2mm]
|a_{n,k}|+|b_{n,k}|\leq \frac{\sqrt{(K^2+1)\lambda^2-1}}{\sqrt{10}} \;\;\text{for all}\;\; n\geq 2, 2\leq k\leq p,\\[2mm]
|a_{n,k}|+|b_{n,k}|\leq \frac{\sqrt{(K^2+1)\lambda^2-1}}{\sqrt{5}}\;\;\text{for all}\;\;n=1,2\leq k\leq p.\end{array}\right.\eeas
\end{cor}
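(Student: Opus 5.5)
The statement immediately above is Corollary \ref{c2}. I will obtain it from Theorem \ref{la4} by specialisation, so the only real content is checking that a $K$-quasiregular polyharmonic mapping is the $K'=0$ case of a $(K,K')$-elliptic polyharmonic mapping. By the definition recalled earlier, a $(K,K')$-elliptic mapping with $K'=0$ is exactly a $K$-quasiregular mapping: the condition $\|D_F\|^2\le KJ_F+K'$ becomes $\|D_F\|^2\le KJ_F$. Hence $F$ satisfies every hypothesis of Theorem \ref{la4} with the pair $(K,0)$. These hypotheses are:
\begin{itemize}
\item the form (\ref{1a1}) together with the argument conditions (\ref{1a2});
\item the bound $\lambda_F(z)\le\lambda$ on $\D$;
\item the normalisation $F(0)=\lambda_F(0)-1=0$.
\end{itemize}

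Next I substitute $K'=0$ into the common numerator of the three estimates in Theorem \ref{la4}. The terms $2K\sqrt{K'}\lambda$ and $K'$ vanish, so
\[
\sqrt{(K^2+1)\lambda^2+2K\sqrt{K'}\lambda+K'-1}=\sqrt{(K^2+1)\lambda^2-1}.
\]
The three cases ($n\ge 2,\ k=1$; $n\ge 2,\ 2\le k\le p$; $n=1,\ 2\le k\le p$) then become exactly the three inequalities claimed, with denominators $n$, $\sqrt{10}$ and $\sqrt{5}$ unchanged.

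The one point to verify is that the radicand is nonnegative, so that the bound makes sense. At $z=0$ we have $1=\lambda_F(0)\le\lambda$, hence $(K^2+1)\lambda^2-1\ge 2\lambda^2-1\ge 1>0$. I do not expect a real obstacle here. Any subtlety, such as how the normalisation $\lambda_F(0)=1$ subtracts the contribution $|a_{1,1}|^2+|b_{1,1}|^2\ge 1$ from the sum in Theorem \ref{la3}, is already handled inside the proof of Theorem \ref{la4}, which I take as given.
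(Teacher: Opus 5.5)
Your proposal is correct and matches the paper's proof exactly: a $K$-quasiregular mapping is the $(K,0)$-elliptic case, so setting $K'=0$ in Theorem \ref{la4} gives the three bounds. Your extra check that the radicand is positive, via $\lambda\ge\lambda_F(0)=1$ and $K\ge 1$, is valid; the paper does not include it.
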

\begin{theo}\label{la5}
Let $F$ be a $(K, K')$-elliptic polyharmonic mapping satisfying the hypothesis of \textrm{Theorem \ref{la3}} with $F(0)=J_F(0)-1=0$. Then we have
\beas\left\{\begin{array}{lll}|a_{n,k}|+|b_{n,k}|\leq \frac{\sqrt{(K^2+1)\lambda^2+2K\sqrt{K'}\lambda+K'-\frac{1}{K+K'}}}{n}\;\;\text{for all}\;\;n\geq2,k=1,\\[2mm]
|a_{n,k}|+|b_{n,k}|\leq \frac{\sqrt{(K^2+1)\lambda^2+2K\sqrt{K'}\lambda+K'-\frac{1}{K+K'}}}{\sqrt{10}} \;\;\text{for all}\;\; n\geq 2, 2\leq k\leq p,\\[2mm]
|a_{n,k}|+|b_{n,k}|\leq \frac{\sqrt{(K^2+1)\lambda^2+2K\sqrt{K'}\lambda+K'-\frac{1}{K+K'}}}{\sqrt{5}}\;\;\text{for all}\;\;n=1,2\leq k\leq p.\end{array}\right.\eeas
\end{theo}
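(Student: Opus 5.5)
The plan is to rerun the argument of Theorem \ref{la4}, replacing the normalization $\lambda_F(0)=1$ by a lower bound on $\lambda_F(0)$ derived from $J_F(0)=1$ and the $(K,K')$-ellipticity. I take as given the weighted estimate of Theorem \ref{la3}:
\beas \sum_{k=1}^p \sum_{n=1}^\infty \left((n+k-1)^2+(k-1)^2\right)\left(|a_{n,k}|^2+|b_{n,k}|^2\right)\leq \frac{C}{2},\qquad C:=(K^2+1)\lambda^2+2K\sqrt{K'}\lambda+K'. \eeas
The term $n=1$, $k=1$ has weight $1$ and equals $|a_{1,1}|^2+|b_{1,1}|^2$. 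Since $F(0)=0$, the representation (\ref{1a1}) gives $F_z(0)=a_{1,1}$ and $F_{\ol z}(0)=b_{1,1}$, so
\beas \Lambda_F(0)=|a_{1,1}|+|b_{1,1}|,\qquad \lambda_F(0)=\bigl||a_{1,1}|-|b_{1,1}|\bigr|. \eeas

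First, I bound $|a_{1,1}|^2+|b_{1,1}|^2$ from below. Ellipticity at $0$ (the mapping is $C^1$, so the a.e. inequality holds pointwise by continuity) gives
\beas \Lambda_F(0)^2\leq KJ_F(0)+K'=K+K'. \eeas
Because $J_F(0)=\Lambda_F(0)\lambda_F(0)=1$, this yields $\lambda_F(0)=1/\Lambda_F(0)\ge 1/\sqrt{K+K'}$. Using the identity
\beas 2\left(|a_{1,1}|^2+|b_{1,1}|^2\right)=\Lambda_F(0)^2+\lambda_F(0)^2, \eeas
we get $2(|a_{1,1}|^2+|b_{1,1}|^2)\ge \lambda_F(0)^2\ge 1/(K+K')$. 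This is exactly the quantity that the argument of Theorem \ref{la4} subtracts, with $1$ replaced by $1/(K+K')$. (In fact $|a_{1,1}|^2+|b_{1,1}|^2\ge J_F(0)=1\ge 1/(K+K')$ gives an even stronger bound, but the weaker one suffices for the stated form.)

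Next, I move the $(1,1)$ term to the right-hand side. For any other pair $(n,k)$ with weight $w_{n,k}=(n+k-1)^2+(k-1)^2$, this gives
\beas w_{n,k}\left(|a_{n,k}|^2+|b_{n,k}|^2\right)\le \frac{C}{2}-\left(|a_{1,1}|^2+|b_{1,1}|^2\right)\le \frac{1}{2}\left(C-\frac{1}{K+K'}\right). \eeas
Combining this with $(|a|+|b|)^2\le 2(|a|^2+|b|^2)$ gives $|a_{n,k}|+|b_{n,k}|\le \sqrt{(C-1/(K+K'))/w_{n,k}}$. It then remains to bound the weights from below, as in (\ref{2a1}):
\beas w_{n,1}=n^2,\qquad w_{n,k}\ge (n+1)^2+1\ge 10 \ (n\ge2,\,k\ge2),\qquad w_{1,k}=k^2+(k-1)^2\ge 5 \ (k\ge2). \eeas
These give the three stated inequalities.

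The main obstacle is the lower bound for the $(1,1)$ coefficient term in terms of $K$ and $K'$. The key point is to evaluate the ellipticity inequality at $z=0$ and combine it with $J_F(0)=\Lambda_F(0)\lambda_F(0)=1$, so that $\lambda_F(0)^2\ge 1/(K+K')$. Everything else is bookkeeping inherited from Theorem \ref{la3}.
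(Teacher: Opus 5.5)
Your proposal is correct and follows the paper's proof. Both arguments evaluate the ellipticity inequality at $0$ and combine it with $J_F(0)=\Lambda_F(0)\lambda_F(0)=1$ to get $\lambda_F(0)\ge 1/\sqrt{K+K'}$, then remove the $(1,1)$ term from the weighted sum of Theorem \ref{la3}. The only difference is cosmetic: the paper first passes to $(|a_{n,k}|+|b_{n,k}|)^2$ and subtracts $(|a_{1,1}|+|b_{1,1}|)^2\ge\lambda_F(0)^2$, whereas you subtract $|a_{1,1}|^2+|b_{1,1}|^2=(\Lambda_F(0)^2+\lambda_F(0)^2)/2$ before that step, which also makes the case $n=1$, $k\ge 2$ explicit (the paper's (\ref{2a8}) lists only $n\ge 2$).
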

\begin{cor}\label{c3}
Let $F$ be a $K$-quasiregular polyharmonic mapping satisfying the hypothesis of \textrm{Theorem \ref{la5}}. Then we have
\beas\left\{\begin{array}{lll}|a_{n,k}|+|b_{n,k}|\leq \frac{\sqrt{(K^2+1)\lambda^2-1/K}}{n}\;\;\text{for all}\;\;n\geq2,k=1,\\[2mm]
|a_{n,k}|+|b_{n,k}|\leq \frac{\sqrt{(K^2+1)\lambda^2-1/K}}{\sqrt{10}} \;\;\text{for all}\;\; n\geq 2, 2\leq k\leq p,\\[2mm]
|a_{n,k}|+|b_{n,k}|\leq \frac{\sqrt{(K^2+1)\lambda^2-1/K}}{\sqrt{5}}\;\;\text{for all}\;\;n=1,2\leq k\leq p.\end{array}\right.\eeas
\end{cor}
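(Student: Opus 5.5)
The plan is to obtain the corollary as a direct specialization of Theorem \ref{la5}, in the same way that Corollaries \ref{c1} and \ref{c2} come from Theorems \ref{la3} and \ref{la4}.

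First, by the definition of a $(K,K')$-elliptic mapping in Section 1, a $K$-quasiregular mapping is exactly a $(K,K')$-elliptic mapping with $K'=0$. So a $K$-quasiregular polyharmonic mapping $F$ of the form (\ref{1a1}) satisfies all hypotheses of Theorem \ref{la5}: the coefficients obey (\ref{1a2}), $\lambda_F(z)\le\lambda$ on $\D$, and $F(0)=J_F(0)-1=0$, all with $K'=0$.

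Next, I substitute $K'=0$ into the common radicand of Theorem \ref{la5}:
\beas (K^2+1)\lambda^2+2K\sqrt{K'}\lambda+K'-\frac{1}{K+K'}\Big|_{K'=0}=(K^2+1)\lambda^2-\frac{1}{K}.\eeas
The term $2K\sqrt{K'}\lambda$ vanishes, $K'$ vanishes, and $K+K'=K\ge 1$, so no division problem arises. Inserting this value into the three estimates of Theorem \ref{la5} gives exactly the three inequalities claimed: the denominator is $n$ for $n\ge2$, $k=1$; it is $\sqrt{10}$ for $n\ge2$, $2\le k\le p$; and it is $\sqrt5$ for $n=1$, $2\le k\le p$.

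There is essentially no obstacle. The only point to check is that the square root is real, i.e.\ $(K^2+1)\lambda^2\ge 1/K$. This follows from Theorem \ref{la5} itself, whose conclusion already presupposes that its radicand is nonnegative under these hypotheses. It can also be seen directly: $J_F(0)=1$ forces $\Lambda_F(0)\ge1$, and quasiregularity gives $\Lambda_F(0)\le K\lambda_F(0)\le K\lambda$, hence $\lambda\ge 1/K$. Therefore $(K^2+1)\lambda^2\ge (K^2+1)/K^2\ge 1\ge 1/K$.
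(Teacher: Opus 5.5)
Your proposal is correct and matches the paper's proof, which obtains Corollary \ref{c3} by setting $K'=0$ in Theorem \ref{la5}, so that the radicand becomes $(K^2+1)\lambda^2-1/K$. Your extra check that this radicand is nonnegative, via $\Lambda_F(0)^2\ge J_F(0)=1$ and $\Lambda_F(0)\le K\lambda_F(0)\le K\lambda$, is valid; the paper leaves this point implicit.
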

In the following, we establish new version of Landau-type theorems for $(K, K')$-elliptic and $K$-quasiregular polyharmonic mappings.
\begin{theo}\label{th1}
Let $F$ be a $(K, K')$-elliptic polyharmonic mapping defined on the unit disk $\D$ of the form
\bea\label{3a1} F(z)&=&a_0+\sum_{k=1}^p |z|^{2(k-1)}\left(h_k(z)+\ol{g_k(z)}\right)=\sum_{k=1}^p|z|^{2(k-1)}G_{p-k+1}(z)\eea
with
\beas G_{p-k+1}(z)&=&h_k(z)+\ol{g_k(z)}=\sum_{n=1}^\infty a_{n,k}z^n+\ol{\sum_{n=1}^\infty b_{n,k}z^n}\;\;\text{for}\;\;k=2,3,\dots ,p\\\text{and}\;\;G_p(z)&=&a_0+\sum_{n=1}^\infty a_{n,1}z^n+\ol{\sum_{n=1}^\infty b_{n,1}z^n}\eeas
such that $F(0)=0$, $\lambda_F(0)=1$ and $\lambda_F(z)\leq \lambda$ in $\D$ for some $\lambda$ satisfying $(K^2+1)\lambda^2+2K\sqrt{K'}\lambda+K'>1$. Then $F$ is univalent on the disk $\D_{r_3}$, where $r_3$ is the unique root in $(0,1)$ of the equation
{\small \beas &&1-\sqrt{(K^2+1)\lambda^2+2K\sqrt{K'}\lambda+K'-1}\Bigg( \frac{r}{1-r}\nonumber\\[2mm]
&&\left.+\sum_{k=2}^pr^{2(k-1)}\left(\frac{1}{\sqrt{5}}+\frac{1}{\sqrt{10}}\frac{2r-r^2}{(1-r)^2}\right)+2\sum_{k=2}^p (k-1)r^{2(k-1)}\left(\frac{1}{\sqrt{5}}+\frac{r}{\sqrt{10}(1-r)}\right)
\right)=0.\eeas}
Furthermore, $F(\D_{r_3})$ contains a schlicht disk with radius
\beas R_3&=&r_3+\sqrt{(K^2+1)\lambda^2+2K\sqrt{K'}\lambda+K'-1}\Bigg(\ln(1-r_3)+r_3\\[2mm]
&&\left.-\sum_{k=2}^{p}r_3^{2(k-1)}\left(\frac{r_3}{\sqrt{5}}+\frac{r_3^2}{\sqrt{10}(1-r_3)}\right)\right).\eeas
\end{theo}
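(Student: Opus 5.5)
The plan follows the standard Landau-type scheme used for Theorem C and Theorem D, with the coefficient bounds of Theorem \ref{la4} taking the place of the bounds that come from $|F|\le M$ or $\Lambda_F\le M$. Set $\mu=\sqrt{(K^2+1)\lambda^2+2K\sqrt{K'}\lambda+K'-1}$. By Theorem \ref{la4}, $|a_{n,1}|+|b_{n,1}|\le \mu/n$ for $n\ge2$, $|a_{n,k}|+|b_{n,k}|\le \mu/\sqrt{10}$ for $n\ge2$ and $k\ge2$, and $|a_{1,k}|+|b_{1,k}|\le\mu/\sqrt5$ for $k\ge2$. The normalization $\lambda_F(0)=1$ gives $\big||a_{1,1}|-|b_{1,1}|\big|=1$. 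Condition (\ref{1a2}) lets us treat the coefficients as lying in a common half-plane, exactly as in \cite{CRW2014,WPL2024}; this is what allows the first-order terms to be combined with a minimum-distortion lower bound.

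\textbf{Univalence.} Fix distinct $z_1,z_2\in\D_r$ with $r<r_3$, and let $\gamma=[z_1,z_2]$. We write
\[
|F(z_2)-F(z_1)|\ge \Big|\int_\gamma a_{1,1}dz+b_{1,1}d\bar z\Big|-\Big|\int_\gamma (F_z-a_{1,1})dz+(F_{\bar z}-b_{1,1})d\bar z\Big|.
\]
The first term is at least $\lambda_F(0)|z_2-z_1|=|z_2-z_1|$. For the second term, differentiate $F=\sum_k|z|^{2(k-1)}(h_k+\overline{g_k})$ term by term, and bound $|F_z-a_{1,1}|+|F_{\bar z}-b_{1,1}|$ on $\D_r$ using the estimates above:
\begin{itemize}
\item the $k=1$ part contributes $\sum_{n\ge2}n\cdot(\mu/n)r^{n-1}=\mu r/(1-r)$;
\item for $k\ge2$, the derivative of $h_k+\overline{g_k}$ times $|z|^{2(k-1)}$ contributes $r^{2(k-1)}\big(\mu/\sqrt5+(\mu/\sqrt{10})\sum_{n\ge2}nr^{n-1}\big)=r^{2(k-1)}\mu\big(1/\sqrt5+(2r-r^2)/(\sqrt{10}(1-r)^2)\big)$;
\item the derivative of $|z|^{2(k-1)}$ gives $(k-1)|z|^{2(k-2)}$ times $z$ or $\bar z$, and since each of $F_z,F_{\bar z}$ picks up one such piece, this contributes $2(k-1)r^{2(k-1)}\mu\big(1/\sqrt5+r/(\sqrt{10}(1-r))\big)$.
\end{itemize}
Summing these yields $|F(z_2)-F(z_1)|>|z_2-z_1|\,\Phi(r)$, where $\Phi$ is exactly the left side of the defining equation for $r_3$. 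The function $\Phi$ is strictly decreasing and continuous, with $\Phi(0)=1$ and $\Phi(r)\to-\infty$ as $r\to1^-$ (this uses $\mu>0$, which is guaranteed by the hypothesis on $\lambda$). Hence $r_3$ is the unique root, and $\Phi>0$ on $[0,r_3)$, which gives univalence in $\D_{r_3}$.

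\textbf{Schlicht disk.} For $|z|=r_3$, integrate along the radius $[0,z]$:
\[
|F(z)|\ge r_3-\Big(\mu\sum_{n\ge2}\tfrac{r_3^n}{n}+\sum_{k=2}^p r_3^{2(k-1)}\big(\tfrac{\mu}{\sqrt5}r_3+\tfrac{\mu}{\sqrt{10}}\tfrac{r_3^2}{1-r_3}\big)\Big).
\]
Here we bound $|F(z)-(a_{1,1}z+b_{1,1}\bar z)|$ directly from the series, and the lower bound $|a_{1,1}z+b_{1,1}\bar z|\ge\lambda_F(0)r_3$. Since $\sum_{n\ge2}r^n/n=-\ln(1-r)-r$, the right-hand side equals $R_3$, and $F(\D_{r_3})\supset\D_{R_3}$.

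\textbf{Main obstacle.} The delicate step is the derivative bound for $k\ge2$, in particular the terms coming from $\partial|z|^{2(k-1)}$. Each of these carries the full harmonic factor $h_k+\overline{g_k}$, including its $n=1$ term, and one must check that the factor $2$ arises correctly from the two partial derivatives. A second point is to justify that only $\lambda_F(0)$, and not $\Lambda_F(0)$, enters the leading term. I would handle this, as in Theorem D, by using the linear lower bound $|a_{1,1}dz+b_{1,1}d\bar z|\ge\big||a_{1,1}|-|b_{1,1}|\big||dz|$, which needs no argument condition.
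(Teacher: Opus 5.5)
Your proposal is correct and follows the paper's proof essentially step by step. You use the same split: the linear term at the origin, bounded below by $\lambda_F(0)|z_2-z_1|$; the $k=1$ tail; the $|z|^{2(k-1)}\,\partial G_{p-k+1}$ terms; and the $\partial|z|^{2(k-1)}$ terms, each controlled by the coefficient bounds of Theorem \ref{la4}, followed by the same series estimate on $|z|=r_3$ to get $R_3$. The only slips are cosmetic: the linear term should read $a_{1,1}\,dz+\overline{b_{1,1}}\,d\bar z$, and condition (\ref{1a2}) enters only through Theorem \ref{la4}, not through the first-order term, as you yourself note at the end.
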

\begin{cor}\label{c4}
Let $F$ be a $K$-quasiregular polyharmonic mapping defined on the unit disk $\D$ of the form (\ref{3a1}) such that $F(0)=0,\lambda_F(0)=1$ and $\lambda_F(z)\leq \lambda$ in $\D$ for some $\lambda>1/\sqrt{K^2+1}$. Then $F$ is univalent on the disc $\D_{r_4}$, where $r_4$ is the unique root in $(0,1)$ of the equation
\beas &&1-\sqrt{(K^2+1)\lambda^2-1}\left(\frac{r}{1-r}+\sum_{k=2}^pr^{2(k-1)}\left(\frac{1}{\sqrt{5}}+\frac{1}{\sqrt{10}}\frac{2r-r^2}{(1-r)^2}\right)\right.\nonumber\\[2mm]
&&\left.+2\sum_{k=2}^p (k-1)r^{2(k-1)}\left(\frac{1}{\sqrt{5}}+\frac{r}{\sqrt{10}(1-r)}\right)
\right)=0.\eeas
Furthermore, $F(\D_{r_4})$ contains a schlicht disk with radius
\beas R_4&=&r_4+\sqrt{(K^2+1)\lambda^2-1}\left(\ln(1-r_4)+r_4-\sum_{k=2}^{p}r_4^{2(k-1)}\left(\frac{r_4}{\sqrt{5}}+\frac{r_4^2}{\sqrt{10}(1-r_4)}\right)\right).\eeas
\end{cor}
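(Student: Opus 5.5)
Corollary \ref{c4} is the special case $K'=0$ of Theorem \ref{th1}, so we will obtain it by specialization rather than by a new argument. A $K$-quasiregular polyharmonic mapping is exactly a $(K,K')$-elliptic polyharmonic mapping with $K'=0$, by the definition preceding the corollary. Substituting $K'=0$ into the quantity $(K^2+1)\lambda^2+2K\sqrt{K'}\lambda+K'$ gives $(K^2+1)\lambda^2$. Hence the hypothesis $(K^2+1)\lambda^2+2K\sqrt{K'}\lambda+K'>1$ of Theorem \ref{th1} reads $(K^2+1)\lambda^2>1$, which is equivalent to $\lambda>1/\sqrt{K^2+1}$, the hypothesis of the corollary. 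The normalizations $F(0)=0$, $\lambda_F(0)=1$, $\lambda_F\le\lambda$ and the form (\ref{3a1}) are identical in both statements. Condition (\ref{1a2}) is inherited through the hypotheses of Theorem \ref{la3} as in Theorem \ref{th1}.

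Applying Theorem \ref{th1}, the constant $\sqrt{(K^2+1)\lambda^2+2K\sqrt{K'}\lambda+K'-1}$ becomes $\sqrt{(K^2+1)\lambda^2-1}$. With this replacement the defining equation for $r_3$ becomes the equation for $r_4$, and the radius $R_3$ becomes $R_4$.

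For completeness, we also check the existence and uniqueness of the root in $(0,1)$. Call the left-hand side $\psi(r)$. Then $\psi(0)=1>0$, and $\psi(r)\to-\infty$ as $r\to1^-$, since the terms $r/(1-r)$ and the like blow up and the prefactor $\sqrt{(K^2+1)\lambda^2-1}$ is strictly positive. Moreover $\psi$ is strictly decreasing, because every bracketed term is increasing in $r$. So the root $r_4$ exists and is unique; this is exactly as in Theorem \ref{th1}.

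The only point that needs care, and the main potential obstacle, is confirming that the proof of Theorem \ref{th1} remains valid when $K'=0$. In particular, the coefficient bounds it relies on, Theorem \ref{la4}, specialize to Corollary \ref{c2}. Since Lemma A with $d=0$ and the resulting estimates all remain valid at $K'=0$, no division by $K'$ occurs, so the specialization goes through directly.
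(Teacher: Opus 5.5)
Your proposal is correct and matches the paper's proof, which consists solely of substituting $K'=0$ into Theorem~\ref{th1}. Your checks that the hypothesis becomes $(K^2+1)\lambda^2>1$, i.e.\ $\lambda>1/\sqrt{K^2+1}$, and that $\sqrt{(K^2+1)\lambda^2+2K\sqrt{K'}\lambda+K'-1}$ reduces to $\sqrt{(K^2+1)\lambda^2-1}$ are all that is needed. One wording correction: condition (\ref{1a2}) is not stated in Theorem~\ref{th1} or in the corollary, so it is not ``inherited''. It is an implicit hypothesis that both need through Theorem~\ref{la4}, which is a defect of the statements rather than of your argument.
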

\begin{theo}\label{th2}
Let $F$ be a $(K, K')$-elliptic polyharmonic mapping defined on the unit disk $\D$ of the form (\ref{3a1}) such that $F(0)=0$, $J_F(0)=1$ and $\lambda_F(z)\leq \lambda$ in $\D$ for some $\lambda$ satisfying $(K^2+1)\lambda^2+2K\sqrt{K'}\lambda+K'>1/(K+K')$. Then $F$ is univalent on the disk $\D_{r_5}$, where $r_5$ is the unique root in $(0,1)$ of the equation
{\small \beas  &&\frac{1}{\sqrt{K+K'}}-\sqrt{(K^2+1)\lambda^2+2K\sqrt{K'}\lambda+K'-\frac{1}{K+K'}}\Bigg( \frac{r}{1-r}\nonumber\\[2mm]&&+\sum_{k=2}^pr^{2(k-1)}\left(\frac{1}{\sqrt{5}}+\frac{1}{\sqrt{10}}\frac{2r-r^2}{(1-r)^2}\right)+2\sum_{k=2}^p (k-1)r^{2(k-1)}\left(\frac{1}{\sqrt{5}}+\frac{r}{\sqrt{10}(1-r)}\right)\Bigg)=0.
\eeas}
Furthermore, $f(\D_{r_5})$ contains a schlicht disk with radius
{\small \beas R_5&=&\frac{r_5}{\sqrt{K+K'}}+\sqrt{(K^2+1)\lambda^2+2K\sqrt{K'}\lambda+K'-\frac{1}{K+K'}}\Bigg(\ln(1-r_5)\nonumber\\[2mm]&&\left.+r_5-\sum_{k=2}^{p}r_5^{2(k-1)}\left(\frac{r_5}{\sqrt{5}}+\frac{r_5^2}{\sqrt{10}(1-r_5)}\right)\right).\eeas}
\end{theo}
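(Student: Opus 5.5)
We will follow the scheme of Theorem \ref{th1}, replacing the normalization $\lambda_F(0)=1$ by $J_F(0)=1$ and the coefficient bounds of Theorem \ref{la4} by those of Theorem \ref{la5}. Throughout we write
\[
N:=\sqrt{(K^2+1)\lambda^2+2K\sqrt{K'}\lambda+K'-\tfrac{1}{K+K'}}.
\]

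\textbf{Step 1: lower bound at the origin.} The first task is to bound $\lambda_F(0)$ from below. Since only the $k=1$ term of $F$ contributes at $z=0$, we have $F_z(0)=a_{1,1}$ and $F_{\ol z}(0)=\ol{b_{1,1}}$. From $J_F(0)=1$ and the elliptic inequality $\Lambda_F(0)^2\le KJ_F(0)+K'=K+K'$ we get
\[
\lambda_F(0)=\frac{J_F(0)}{\Lambda_F(0)}\ge \frac{1}{\sqrt{K+K'}}.
\]
This is exactly the leading term $1/\sqrt{K+K'}$ in the equation defining $r_5$.

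\textbf{Step 2: univalence in $\D_{r_5}$.} Take distinct $z_1,z_2\in\D_r$ with $r<r_5$. Integrate along the segment $[z_1,z_2]$ and write
\[
|F(z_2)-F(z_1)|\ge \Bigl|\int F_z(0)\,dz+F_{\ol z}(0)\,d\ol z\Bigr|-\int\bigl(|F_z(z)-F_z(0)|+|F_{\ol z}(z)-F_{\ol z}(0)|\bigr)|dz|.
\]
The first term is at least $\lambda_F(0)|z_2-z_1|\ge |z_2-z_1|/\sqrt{K+K'}$. To bound the error, differentiate $F=\sum_k|z|^{2(k-1)}(h_k+\ol{g_k})$ term by term, as in Theorem \ref{th1}:
\begin{itemize}
\item The $k=1$ part contributes at most $\sum_{n\ge2}n(|a_{n,1}|+|b_{n,1}|)r^{n-1}\le N\,r/(1-r)$, using Theorem \ref{la5}.
\item For $k\ge2$, the factor $|z|^{2(k-1)}$ times $h_k'$ and $g_k'$ contributes $r^{2(k-1)}\bigl(\tfrac{N}{\sqrt5}+\tfrac{N}{\sqrt{10}}\sum_{n\ge2}nr^{n-1}\bigr)=r^{2(k-1)}N\bigl(\tfrac1{\sqrt5}+\tfrac{2r-r^2}{\sqrt{10}(1-r)^2}\bigr)$.
\item For $k\ge2$, the derivative of $|z|^{2(k-1)}$, of modulus at most $2(k-1)r^{2k-3}$ for both $\partial_z$ and $\partial_{\ol z}$, times $|h_k|+|g_k|\le N\bigl(\tfrac r{\sqrt5}+\tfrac{r^2}{\sqrt{10}(1-r)}\bigr)$ contributes $2(k-1)r^{2(k-1)}N\bigl(\tfrac1{\sqrt5}+\tfrac{r}{\sqrt{10}(1-r)}\bigr)$.
\end{itemize}
Summing, $|F(z_2)-F(z_1)|>|z_2-z_1|\,\Psi(r)$, where $\Psi$ is the left-hand side of the defining equation. $\Psi$ is strictly decreasing on $(0,1)$, positive near $0$ (because $N>0$ by hypothesis and the leading term is positive), and tends to $-\infty$ as $r\to1$. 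Hence $r_5$ exists and is unique, and $\Psi(r)>0$ for $r<r_5$ gives univalence.

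\textbf{Step 3: the schlicht disk.} For $|z|=r_5$ we estimate $|F(z)|$ from below by integrating radially:
\[
|F(z)|\ge \lambda_F(0)r_5-\sum_{n\ge2}(|a_{n,1}|+|b_{n,1}|)r_5^n-\sum_{k\ge2}r_5^{2(k-1)}\bigl(|h_k(z)|+|g_k(z)|\bigr).
\]
The first sum is bounded using $\sum_{n\ge2}r^n/n=-\ln(1-r)-r$. The $k\ge2$ terms are bounded by $r_5^{2(k-1)}N\bigl(\tfrac{r_5}{\sqrt5}+\tfrac{r_5^2}{\sqrt{10}(1-r_5)}\bigr)$. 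Together these give $|F(z)|\ge R_5$, so $F(\D_{r_5})\supset\D_{R_5}$.

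\textbf{Main obstacle.} The delicate point is Step 1 combined with the use of Theorem \ref{la5}. The normalization $J_F(0)=1$ controls $\lambda_F(0)$ only through the elliptic inequality, and the subtracted constant $1/(K+K')$ in Theorem \ref{la5} must match. One must check that $|a_{1,1}|^2+|b_{1,1}|^2\ge \lambda_F(0)^2\ge 1/(K+K')$ is what the proof of Theorem \ref{la5} used. We would verify this first, and if necessary adjust $N$ accordingly.
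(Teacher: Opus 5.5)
Your proposal is essentially the paper's proof: the paper likewise derives $\lambda_F(0)\ge 1/\sqrt{K+K'}$ from $\Lambda_F(0)^2\le K+K'$, then reruns the segment-integration and boundary estimates of Theorem \ref{th1} with the coefficient bounds of Theorem \ref{la5}, exactly as in your Steps 2--3. The point you flag needs no adjustment, since Theorem \ref{la5} uses $(|a_{1,1}|+|b_{1,1}|)^2\ge\lambda_F(0)^2$ and indeed $(|a_{1,1}|+|b_{1,1}|)^2=\Lambda_F(0)^2\ge J_F(0)=1$; note, however, that invoking Theorem \ref{la5}, in your argument as in the paper's, tacitly requires the argument condition (\ref{1a2}), which is not among the hypotheses of Theorem \ref{th2}.
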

\begin{cor}\label{c5}
Let $F$ be a $K$-quasiregular polyharmonic mapping defined on the unit disk $\D$ of the form (\ref{3a1}) such that $F(0)=0$, $J_F(0)=1$ and $\lambda_F(z)\leq \lambda$ in $\D$ for some $\lambda$ satisfying $\lambda>1/\sqrt{K(K^2+1)}$. Then $F$ is univalent on the disk $\D_{r_6}$, where $r_6$ is the unique root in $(0,1)$ of the equation
{\small \beas  &&\frac{1}{\sqrt{K}}-\sqrt{(K^2+1)\lambda^2-\frac{1}{K}}\Bigg( \frac{r}{1-r}+\sum_{k=2}^pr^{2(k-1)}\left(\frac{1}{\sqrt{5}}+\frac{1}{\sqrt{10}}\frac{2r-r^2}{(1-r)^2}\right)\nonumber\\[2mm]
&&\left.+2\sum_{k=2}^p (k-1)r^{2(k-1)}\left(\frac{1}{\sqrt{5}}+\frac{r}{\sqrt{10}(1-r)}\right)
\right)=0.
\eeas}
Furthermore, $f(\D_{r_6})$ contains a schlicht disk with radius
{\small \beas R_6=\frac{r_6}{\sqrt{K}}+\sqrt{(K^2+1)\lambda^2-\frac{1}{K}}\left(\ln(1-r_6)+r_4-\sum_{k=2}^{p}r_6^{2(k-1)}\left(\frac{r_6}{\sqrt{5}}+\frac{r_6^2}{\sqrt{10}(1-r_6)}\right)\right).\eeas}
\end{cor}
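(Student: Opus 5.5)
The plan is to obtain Corollary \ref{c5} as the special case $K'=0$ of Theorem \ref{th2}, which we take as already established. By the definition of $(K,K')$-elliptic mappings recalled in Section 1, a $K$-quasiregular mapping is exactly a $(K,0)$-elliptic mapping: the inequality $\|D_F\|^2\le KJ_F+K'$ with $K'=0$ becomes $\|D_F\|^2\le KJ_F$ a.e. in $\D$. So a $K$-quasiregular polyharmonic mapping $F$ of the form (\ref{3a1}) with $F(0)=0$, $J_F(0)=1$ and $\lambda_F\le\lambda$ is a $(K,0)$-elliptic polyharmonic mapping satisfying the remaining hypotheses of Theorem \ref{th2}.

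Next I substitute $K'=0$ into every quantity in Theorem \ref{th2}. We have
\[
(K^2+1)\lambda^2+2K\sqrt{K'}\lambda+K'-\frac{1}{K+K'}=(K^2+1)\lambda^2-\frac{1}{K},
\qquad
\frac{1}{\sqrt{K+K'}}=\frac{1}{\sqrt{K}}.
\]
The admissibility condition of Theorem \ref{th2} then reads $(K^2+1)\lambda^2>1/K$. Since $\lambda>0$, this is equivalent to $\lambda>1/\sqrt{K(K^2+1)}$, which is precisely the hypothesis of the corollary. With these substitutions, the defining equation for $r_5$ becomes verbatim the equation for $r_6$. Hence $r_6=r_5|_{K'=0}$ is the unique root in $(0,1)$, and $F$ is univalent on $\D_{r_6}$.

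Likewise, $R_5$ with $K'=0$ becomes the stated radius $R_6$. Here one should read $r_6$ in place of the misprinted $r_4$ inside the bracket, since the corollary's $R_6$ is just $R_5$ evaluated at $r_5=r_6$. Thus $F(\D_{r_6})$ contains a schlicht disk of radius $R_6$.

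There is no real obstacle; the only points needing care are the following.
\begin{itemize}
\item Uniqueness of the root is inherited from Theorem \ref{th2}. It holds because the left-hand side equals $1/\sqrt{K}>0$ at $r=0$, tends to $-\infty$ as $r\to1^-$, and is strictly decreasing, so no separate argument is needed.
\item The underlying coefficient bounds used in Theorem \ref{th2} specialize to those of Corollary \ref{c3}, again by setting $K'=0$ in Theorem \ref{la5}.
\end{itemize}
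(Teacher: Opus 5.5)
Your proposal is correct and matches the paper's argument: the paper proves Corollary~\ref{c5} in one line by setting $K'=0$ in Theorem~\ref{th2}. Your check that the admissibility condition becomes $\lambda>1/\sqrt{K(K^2+1)}$ and that $r_5,R_5$ reduce to $r_6,R_6$ makes that substitution explicit, and you are right that the $r_4$ inside $R_6$ is a misprint for $r_6$.
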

\section{Proofs of the Main Results}
\begin{proof}[{\bf Proof of Theorem \ref{tha1}}]
Since $G_p(z)$ is an elliptic harmonic mapping with $\lambda_{G_p}(z)\leq \Lambda_p$, we have
\beas \Lambda_{G_p}^2(z)\leq KJ_{G_p}(z)+K'\leq K\Lambda_p\Lambda_{G_p}(z)+K'\quad\text{for}\quad z\in\D.\eeas
Hence, we have
\bea\label{3z1} \Lambda_{G_p}(z)\leq \frac{K\Lambda_p+\sqrt{K^2\Lambda_p^2+4K'}}{2}:=\Lambda_p'\quad\text{for}\quad z\in\D.\eea
Consequently, Theorem A implies that $F(z)$ is univalent in $\D_{r_1}$, where $r_1$ is the unique root in $(0,1)$ of the equation given by (\ref{2z1}), and $F(\D_{r_1})$ contains a schlicht disk $\D_{R_1}$.\\[2mm]
\indent If $K=1$ and $K'=0$, we have $\Lambda_{G_p}(z)\leq \Lambda_p$ for $z\in\D$. Thus, when $K=1$, $K'=0$ and $M_{p-k+1}=1$ for $k=2,\cdots,p$, the function
\beas F_1(z)=\Lambda_p^2z+(\Lambda_p^3-\Lambda_p)\log{(1-\frac{z}{\Lambda_p})}-\sum_{k=2}^p|z|^{2(k-1)}z\eeas
implies that the radii $r_1$ and $R_1$ are sharp.
\end{proof}
\begin{proof}[{\bf Proof of Theorem \ref{tha2}}]
In view of (\ref{3z1}), for each $k\in\{2,\cdots, p\}$ and $z\in\D$, we have
\beas \Lambda_{G_{p-k+1}}(z)\leq \frac{K\Lambda_{p-k+1}+\sqrt{K^2\Lambda_{p-k+1}^2+4K'}}{2}:=\Lambda_{p-k+1}'.\eeas
Consequently, Theorem B implies that $F(z)$ is univalent in $\D_{r_2}$, where $r_2$ is the unique root in $(0,1)$ of the equation given by (\ref{2z3}), and $F(\D_{r_2})$ contains a schlicht disk $\D_{R_2}$.\\[2mm]
\indent If $K=1$ and $K'=0$, we have $\Lambda_{G_{p-k+1}}(z)\leq \Lambda_{p-k+1}$ for each $k\in\{2,\cdots, p\}$ and $z\in\D$. Thus, when $K=1$, $K'=0$ and $M_{p}=1$, the function $F_2(z)=z-\sum_{k=2}^{p}\Lambda_{p-k+1}|z|^{2(k-1)}z $
implies that the radii $r_2$ and $R_2$ are sharp.
\end{proof}
\begin{proof}[{\bf Proof of Theorem \ref{la3}}]
Differentiating $F(z)$ partially with respect to $z$ and $\ol{z}$, respectively, we have
\beas F_z(z)&=&\sum_{k=1}^p |z|^{2(k-1)}h_k'(z)+\sum_{k=2}^p (k-1)\ol{z}|z|^{2(k-2)}\left(h_k(z)+\ol{g_k(z)}\right),\\[2mm] F_{\ol{z}}(z)&=&\sum_{k=1}^p |z|^{2(k-1)}g_k'(z)+\sum_{k=2}^p (k-1)z|z|^{2(k-2)}\left(h_k(z)+\ol{g_k(z)}\right).\eeas
Note that from the hypothesis (\ref{1a2}), we have $\textrm{Re}(a_{n,k_1}\ol{a_{n,k_2}})\geq 0$ and $\textrm{Re}(b_{n,k_1}\ol{b_{n,k_2}})\geq 0$ for each $k_1,k_2\in\{1,\cdots,p\}$ and thus, using Parseval's identity, we have
\bea\label{2a2} \frac{1}{2\pi}\int_0^{2\pi}|F_z(re^{i\theta})|^2d\theta &\geq&\sum_{k=1}^p r^{4(k-1)}\sum_{n=1}^\infty n^2|a_{n,k}|^2r^{2(n-1)}\nonumber\\[2mm]
&&+\sum_{k=2}^p(k-1)^2r^{4(k-1)}\sum_{k=1}^\infty (|a_{n,k}|^2+|b_{n,k}|^2)r^{2(n-1)}\nonumber\\[2mm]
&&+2\sum_{k=2}^p (k-1)r^{4(k-1)}\sum_{n=1}^\infty n|a_{n,k}|^2r^{2(n-1)}.\eea
Also, similarly we have
\bea\label{2a3} \frac{1}{2\pi}\int_0^{2\pi}|F_{\ol{z}}(re^{i\theta})|^2d\theta &\geq&\sum_{k=1}^p r^{4(k-1)}\sum_{n=1}^\infty n^2|b_{n,k}|^2r^{2(n-1)}\nonumber\\[2mm]
&&+\sum_{k=2}^p(k-1)^2r^{4(k-1)}\sum_{k=1}^\infty (|a_{n,k}|^2+|b_{n,k}|^2)r^{2(n-1)}\nonumber\\[2mm]
&&+2\sum_{k=2}^p (k-1)r^{4(k-1)}\sum_{n=1}^\infty n|b_{n,k}|^2r^{2(n-1)}.\eea
Since $\lambda_F(z)\leq \lambda$ and $F$ is sense preserving, we have
\beas |F_z|\leq \lambda +|F_{\ol{z}}|.\eeas
Using \textrm{Lemma A}, we have
\beas |F_z|-\lambda \leq |F_{\ol{z}}|\leq c|F_z|+d,\eeas
which shows that
\beas |F_z|\leq \frac{\lambda +d}{1-c}.\eeas
Therefore, in view of \textrm{Lemma A}, we have
\beas |F_z|^2+|F_{\ol{z}}|^2&\leq& |F_z|^2+\left(c|F_z|+d\right)^2\\[2mm]
&=& (1+c^2)|F_z|^2+2cd|F_z|+d^2\\[2mm]
&\leq & (1+c^2)\left(\frac{\lambda +d}{1-c}\right)^2+2cd\frac{\lambda +d}{1-c}+d^2\\[2mm]
&=& \frac{(K^2+1)\lambda^2+2K\sqrt{K'}\lambda+K'}{2}.\eeas
Thus, we have
{\small \bea\label{2a4} \frac{1}{2\pi}\int_0^{2\pi}\left(|F_z(re^{i\theta})|^2+|F_{\ol{z}}(re^{i\theta})|^2\right)d\theta\leq \frac{1}{2\pi}\int_0^{2\pi}\frac{(K^2+1)\lambda^2+2K\sqrt{K'}\lambda+K'}{2}d\theta\eea}
In view of (\ref{2a2}), (\ref{2a3}) and (\ref{2a4}), we have
\beas &&\sum_{k=1}^p r^{4(k-1)}\sum_{n=1}^\infty n^2(|a_{n,k}|^2+|b_{n,k}|^2)r^{2(n-1)}\\[2mm]
&&+2\sum_{k=2}^p(k-1)^2r^{4(k-1)}\sum_{k=1}^\infty (|a_{n,k}|^2+|b_{n,k}|^2)r^{2(n-1)}\\[2mm]
&&+2\sum_{k=2}^p (k-1)r^{4(k-1)}\sum_{n=1}^\infty n(|a_{n,k}|^2+|b_{n,k}|^2)r^{2(n-1)}\\[2mm]
&\leq& \frac{(K^2+1)\lambda^2+2K\sqrt{K'}\lambda+K'}{2}.\eeas
Letting $r\to 1^{-1}$ in the above inequality, we have
{\small \bea\label{2a5} \sum_{k=1}^p \sum_{n=1}^\infty \left((n+k-1)^2+(k-1)^2\right)\left(|a_{n,k}|^2+|b_{n,k}|^2\right)\leq \frac{(K^2+1)\lambda^2+2K\sqrt{K'}\lambda+K'}{2}.\eea}
From (\ref{2a5}), we have
\beas  \sum_{n=2}^\infty n^2\left(|a_{n,k}|^2+|b_{n,k}|^2\right)\leq \frac{(K^2+1)\lambda^2+2K\sqrt{K'}\lambda+K'}{2}\quad \text{for}\quad k=1,\eeas
from which, we find that
\bea\label{2a6} |a_{n,k}|^2+|b_{n,k}|^2\leq \frac{(K^2+1)\lambda^2+2K\sqrt{K'}\lambda+K'}{2n^2}\;\;\text{for all}\;\;n\geq2,k=1.\eea
Using the inequality $x^2+y^2\geq (x+y)^2/2$ in (\ref{2a6}), we have
\beas |a_{n,k}|+|b_{n,k}|\leq \frac{\sqrt{(K^2+1)\lambda^2+2K\sqrt{K'}\lambda+K'}}{n}.\eeas
Furthermore,
\bea\label{2a7} \left\{\begin{array}{lll} (n+k-1)^2+(k-1)^2\geq 10 \;\;\text{for all}\;\; n\geq 2, 2\leq k\leq p,\\[2mm]
(n+k-1)^2+(k-1)^2\geq 5\;\;\text{for all}\;\;n=1,2\leq k\leq p,\end{array}\right.\eea
and (\ref{2a5}) leads to the proof of the remaining two cases. This completes the proof.
\end{proof}
\begin{proof}[{\bf Proof of Corollary \ref{c1}}]
Since a $K$-quasiregular polyharmonic mapping is a $(K,0)$-elliptic polyharmonic mapping, by substituting $K'=0$ in \textrm{Theorem \ref{la3}}, we have the desired results.
\end{proof}
\begin{proof}[{\bf Proof of Theorem \ref{la4}}]
In view of (\ref{2a5}) and using the inequality $x^2+y^2\geq (x+y)^2/2$, we have
\bea\label{2a8} &&\sum_{k=1}^p \sum_{n=2}^\infty \left((n+k-1)^2+(k-1)^2\right)\left(|a_{n,k}|+|b_{n,k}|\right)^2\nonumber\\
&\leq &(K^2+1)\lambda^2+2K\sqrt{K'}\lambda+K'-\left(k^2+(k-1)^2\right)\left(|a_{1,k}|+|b_{1,k}|\right)^2\nonumber\\
&\leq& (K^2+1)\lambda^2+2K\sqrt{K'}\lambda+K'-\left(|a_{1,1}|+|b_{1,1}|\right)^2.\eea
Furthermore, since $\lambda_F(0)=\left||a_{1,1}|-|b_{1,1}|\right|=1$ and using the inequality $|x|+|y|\geq |x+y|\geq \left||x|-|y|\right|$ for any real $x,y$, from (\ref{2a8}), we have
{\small \beas \sum_{k=1}^p \sum_{n=2}^\infty \left((n+k-1)^2+(k-1)^2\right)\left(|a_{n,k}|+|b_{n,k}|\right)^2
\leq (K^2+1)\lambda^2+2K\sqrt{K'}\lambda+K'-1.\eeas
Thus, for $k=1$, we have
\beas  \sum_{n=2}^\infty n^2\left(|a_{n,k}|+|b_{n,k}|\right)^2\leq (K^2+1)\lambda^2+2K\sqrt{K'}\lambda+K'-1,\eeas}
from which, we have
\beas |a_{n,k}|+|b_{n,k}|\leq \frac{\sqrt{(K^2+1)\lambda^2+2K\sqrt{K'}\lambda+K'-1}}{n}\;\;\text{for all}\;\;n\geq2,k=1.\eeas
In view of (\ref{2a7}), we have the proof of the remaining two cases. This completes the proof.
\end{proof}
\begin{proof}[{\bf Proof of Corollary \ref{c2}}]
By substituting $K'=0$ in \textrm{Theorem \ref{la4}}, we have the desired results.
\end{proof}
\begin{proof}[{\bf Proof of Theorem \ref{la5}}]
Since $J_F(0)=1$, we have $\Lambda_F^2(0)\leq K+K'$. Thus,
\beas \lambda_F(0)=\frac{J_F(0)}{\Lambda_F(0)}\geq \frac{1}{\sqrt{K+K'}}.\eeas Hence, it follows from (\ref{2a8}) that
\beas &&\sum_{k=1}^p \sum_{n=2}^\infty \left((n+k-1)^2+(k-1)^2\right)\left(|a_{n,k}|+|b_{n,k}|\right)^2
\nonumber\\[2mm]&\leq& (K^2+1)\lambda^2+2K\sqrt{K'}\lambda+K'-\frac{1}{K+K'}.\eeas
Thus, for $k=1$, we have
\beas  \sum_{n=2}^\infty n^2\left(|a_{n,k}|+|b_{n,k}|\right)^2\leq (K^2+1)\lambda^2+2K\sqrt{K'}\lambda+K'-\frac{1}{K+K'},\eeas
from which, we have
\beas |a_{n,k}|+|b_{n,k}|\leq \frac{\sqrt{(K^2+1)\lambda^2+2K\sqrt{K'}\lambda+K'-\frac{1}{K+K'}}}{n}\;\;\text{for all}\;\;n\geq2,k=1.\eeas
In view of (\ref{2a7}), we have the proof of the remaining two cases. This completes the proof.
\end{proof}
\begin{proof}[{\bf Proof of Corollary \ref{c3}}]
By substituting $K'=0$ in \textrm{Theorem \ref{la5}}, we have the desired results.
\end{proof}
\begin{proof}[{\bf Proof of Theorem \ref{th1}}]
Differentiating $F(z)$ partially with respect to $z$ and $\ol{z}$, respectively, we have
\beas F_z(z)&=&\left(G_p\right)_z(z)+\sum_{k=2}^p (k-1)z^{k-2}\ol{z}^{k-1}G_{p-k+1}(z)+\sum_{k=2}^p |z|^{2(k-1)}\left(G_{p-k+1}\right)_z(z),\\[2mm]
F_{\ol{z}}(z)&=&\left(G_p\right)_{\ol{z}}(z)+\sum_{k=2}^p (k-1)z^{k-1}\ol{z}^{k-2}G_{p-k+1}(z)+\sum_{k=2}^p |z|^{2(k-1)}\left(G_{p-k+1}\right)_{\ol{z}}(z).\eeas
To prove the univalence of $F$ in $\D_{r_3}$, let us assume $z_1,z_2\in\D_r$ with $z_1\neq z_2$ and $r\in (0,1)$. Then for $[z_1,z_2]$, the line segment joining $z_1$ to $z_2$, we have
\beas \left|F(z_2)-F(z_1)\right| =\left|\int_{[z_1,z_2]}F_z(z)dz+F_{\ol{z}}(z)d\ol{z}\right|\geq I_1-I_2-I_3-I_4,\eeas
where
\beas  I_1&=&\left|\int_{[z_1,z_2]}(G_p)_z(0)dz+(G_p)_{\ol{z}}(0)d\ol{z}\right|,\\[2mm]
I_2&=&\left|\int_{[z_1,z_2]}\left((G_p)_z(z)-(G_p)_z(0)\right)dz+\left((G_p)_{\ol{z}}(z)-(G_p)_{\ol{z}}(0)\right)d{\ol{z}}\right|,\\[2mm]
I_3&=& \left|\sum_{k=2}^p\int_{[z_1,z_2]}|z|^{2(k-1)}\left((G_{p-k+1})_z(z)dz+(G_{p-k+1})_{\ol{z}}(z)d\ol{z}\right)\right|,\\[2mm]
I_4&=&\left|\sum_{k=2}^p\int_{[z_1,z_2]}(k-1)G_{p-k+1}(z)\left(z^{k-2}\ol{z}^{k-1}dz+z^{k-1}\ol{z}^{k-2}d\ol{z}\right)\right|.\eeas
Firstly, since $\lambda_F(0)=1$, we have
\beas I_1\geq \int_{[z_1,z_2]}\lambda_{G_p}(0)|dz|=\int_{[z_1,z_2]}\left||a_{1,1}|-|b_{1,1}|\right||dz|=\int_{[z_1,z_2]}\lambda_F(0)|dz|=|z_2-z_1|.\eeas
Next, in view of Theorem \ref{la4}, we have the following inequalities for $I_2,I_3$ and $I_4$.
\beas I_2&=&\left|\int_{[z_1,z_2]}\sum_{n=2}^\infty n\left(a_{n,1}z^{n-1}dz+\ol{b_{n,1}z^{n-1}}d\ol{z}\right)\right|\\[2mm]
&\leq& |z_2-z_1|\sum_{n=2}^\infty n(|a_{n,1}|+|b_{n,1}|)r^{n-1}\\[2mm]
&\leq & |z_2-z_1|\sqrt{(K^2+1)\lambda^2+2K\sqrt{K'}\lambda+K'-1}\cdot \frac{r}{1-r},\eeas
\beas I_3&=&\left|\sum_{k=2}^p\int_{[z_1,z_2]}|z|^{2(k-1)}\sum_{n=1}^\infty n\left(a_{n,k}z^{n-1}dz+\ol{b_{n,k}z^{n-1}}d\ol{z}\right)\right|\\[2mm]
&\leq& |z_2-z_1|\sum_{k=2}^pr^{2(k-1)}\sum_{n=1}^\infty n(|a_{n,k}|+|b_{n,k}|)r^{n-1}\\[2mm]
&=&|z_2-z_1|\sum_{k=2}^pr^{2(k-1)}\left(|a_{1,k}|+|b_{1,k}|+\sum_{n=2}^\infty n(|a_{n,k}|+|b_{n,k}|)r^{n-1}\right)\\[2mm]
&\leq&|z_2-z_1|\sqrt{(K^2+1)\lambda^2+2K\sqrt{K'}\lambda+K'-1}\\[2mm]&&\sum_{k=2}^pr^{2(k-1)}\left(\frac{1}{\sqrt{5}}+\frac{1}{\sqrt{10}}\frac{2r-r^2}{(1-r)^2}\right)\eeas
and
\beas I_4&=& \left|\sum_{k=2}^p\int_{[z_1,z_2]}(k-1)|z|^{2(k-2)}\sum_{n=1}^\infty \left(a_{n,k} z^n+\ol{b_{n,k} z^n}\right)\left(\ol{z}dz+zd\ol{z}\right)\right|\\[2mm]
&\leq & \left|\sum_{k=2}^p\int_{[z_1,z_2]}(k-1)|z|^{2(k-2)}\sum_{n=1}^\infty 2\left(|a_{n,k}|+|b_{n,k}|\right)|z|^{n+1}|dz|\right|\\[2mm]
&\leq & 2|z_2-z_1|\sum_{k=2}^p (k-1)r^{2(k-2)}\sum_{n=1}^\infty \left(|a_{n,k}|+|b_{n,k}|\right)r^{n+1}\\[2mm]
&=&2|z_2-z_1|\sum_{k=2}^p (k-1)r^{2(k-2)}\left(\left(|a_{1,k}|+|b_{1,k}|\right)r^2+\sum_{n=2}^\infty \left(|a_{n,k}|+|b_{n,k}|\right)r^{n+1}\right)\\[2mm]
&\leq & 2|z_2-z_1|\sqrt{(K^2+1)\lambda^2+2K\sqrt{K'}\lambda+K'-1}\\[2mm]&&\sum_{k=2}^p (k-1)r^{2(k-1)}\left(\frac{1}{\sqrt{5}}+\frac{r}{\sqrt{10}(1-r)}\right) .\eeas
Thus, we have
\beas &&|F(z_2)-F(z_1)|\geq \psi (r) |z_2-z_1|,\eeas
where
{\small \beas \psi(r)
&=&1-\sqrt{(K^2+1)\lambda^2+2K\sqrt{K'}\lambda+K'-1}\Bigg( \frac{r}{1-r}\nonumber\\[2mm]
&&\left.+\sum_{k=2}^pr^{2(k-1)}\left(\frac{1}{\sqrt{5}}+\frac{1}{\sqrt{10}}\frac{2r-r^2}{(1-r)^2}\right)+2\sum_{k=2}^p (k-1)r^{2(k-1)}\left(\frac{1}{\sqrt{5}}+\frac{r}{\sqrt{10}(1-r)}\right)
\right)\\[2mm]
&=&1-\sum_{n=1}^\infty \psi_nr^n.\eeas}
Since $(K^2+1)\lambda^2+2K\sqrt{K'}\lambda+K'>1$, $\psi_n$ are positive for each $n\geq 1$. Thus, $\psi(r)$ is strictly decreasing in $[0,1)$. Furthermore,
\beas \lim_{r\to 0^+} \psi(r)=1, \lim_{r\to 1^-}\psi(r)=-\infty.\eeas
Thus, there exists a unique $r_3\in(0,1)$ satisfying $\psi(r_1)=0$, which shows that $F$ is univalent in the disk $D_{r_3}$.\\\par
Next, for any point $z=r_3e^{i\theta}$ on $\partial \D_{r_3}$, from Theorem (\ref{la4}), we have
\beas |F(z)|&=& \left|\sum_{k=1}^p|z|^{2(k-1)}G_{p-k+1}(z)\right|\\[2mm]
&=&\left|\sum_{k=1}^p |z|^{2(k-1)}\sum_{n=1}^\infty (a_{n,k}z^n+\ol{b_{n,k}z^n})\right|\\[2mm]
&\geq & \lambda_F(0)r_3-\sum_{n=2}^\infty (|a_{n,1}|+|b_{n,1}|)r_3^n\\[2mm]
&&-\sum_{k=2}^{p}r_3^{2(k-1)}\left((|a_{1,k}|+|b_{1,k}|)r_3+\sum_{n=2}^\infty (|a_{n,k}|+|b_{n,k}|)r_3^n\right)\\[2mm]
&\geq& r_3+\sqrt{(K^2+1)\lambda^2+2K\sqrt{K'}\lambda+K'-1}\Bigg(\ln(1-r_3)+r_3\\[2mm]
&&\left.-\sum_{k=2}^{p}r_3^{2(k-1)}\left(\frac{r_3}{\sqrt{5}}+\frac{r_3^2}{\sqrt{10}(1-r_3)}\right)\right).\eeas
This completes the proof.
\end{proof}
\begin{proof}[{\bf Proof of Corollary \ref{c4}}]
By substituting $K'=0$ in Theorem (\ref{th1}), we have the desired results. 
\end{proof}
\begin{proof}[{\bf Proof of Theorem \ref{th2}}]
To prove the univalence of $F$ in $\D_{r_5}$, let us assume $z_1,z_2\in\D_r$ with $z_1\neq z_2$ and $r\in (0,1)$. Then by Theorem \ref{la5} and the proof of Theorem \ref{th1}, we have
\beas |F(z_2)-F(z_1)|\geq \xi(r) |z_2-z_1|,\eeas
where
{\small \beas \xi(r)&=& \frac{1}{\sqrt{K+K'}}-\sqrt{(K^2+1)\lambda^2+2K\sqrt{K'}\lambda+K'-\frac{1}{K+K'}}\Bigg( \frac{r}{1-r}\nonumber\\[2mm]&&+\sum_{k=2}^pr^{2(k-1)}\left(\frac{1}{\sqrt{5}}+\frac{1}{\sqrt{10}}\frac{2r-r^2}{(1-r)^2}\right)+2\sum_{k=2}^p (k-1)r^{2(k-1)}\left(\frac{1}{\sqrt{5}}+\frac{r}{\sqrt{10}(1-r)}\right)
\Bigg)\nonumber\\[2mm]
&=&1-\sum_{n=1}^\infty \xi_nr^n.\eeas}
Since $(K^2+1)\lambda^2+2K\sqrt{K'}\lambda+K'>1/(K+K')$, $\xi_n$ are positive for each $n\geq 1$. Thus, $\xi(r)$ is strictly decreasing in $[0,1)$. Furthermore,
\beas \lim_{r\to 0^+} \xi(r)=\frac{1}{\sqrt{K+K'}}, \lim_{r\to 1^-}\xi(r)=-\infty.\eeas
Thus, there exists a unique $r_5\in(0,1)$ satisfying $\xi(r_5)=0$, which shows that $F$ is univalent in the disk $D_{r_5}$.\\\par
\indent Next, for any point $z=r_5e^{i\theta}$ on $\partial \D_{r_5}$, using Theorem (\ref{la5}) and the proof of Theorem \ref{th1}, we have
\beas |F(z)|&\geq & \lambda_F(0)r_5-\sum_{n=2}^\infty (|a_{n,1}|+|b_{n,1}|)r_5^n\\[2mm]
&&-\sum_{k=2}^{p}r_5^{2(k-1)}\left((|a_{1,k}|+|b_{1,k}|)r_5+\sum_{n=2}^\infty (|a_{n,k}|+|b_{n,k}|)r_5^n\right)\\[2mm]
&\geq& \frac{r_5}{\sqrt{K+K'}}+\sqrt{(K^2+1)\lambda^2+2K\sqrt{K'}\lambda+K'-\frac{1}{K+K'}}\Bigg(r_5\\[2mm]
&&\left.+\ln(1-r_5)-\sum_{k=2}^{p}r_5^{2(k-1)}\left(\frac{r_5}{\sqrt{5}}+\frac{r_5^2}{\sqrt{10}(1-r_5)}\right)\right).\eeas
This completes the proof.
\end{proof}
\begin{proof}[{\bf Proof of Corollary \ref{c5}}]
By substituting $K'=0$ in Theorem (\ref{th2}), we have the desired results. 
\end{proof}
\section*{Declarations}
\noindent{\bf Acknowledgement:} The second Author is supported by University Grants Commission (IN) fellowship (No. NBCFDC/CSIR-UGC-DECEMBER-2023).\\[2mm]
{\bf Conflict of Interest:} The authors declare that there are no conflicts of interest regarding the publication of this paper.\\[2mm]
{\bf Availability of data and materials:} Not applicable

\end{document}